\documentclass[a4paper]{amsart} 
\usepackage{amssymb,times, amscd,amsmath,amsthm, xypic}
\usepackage[all]{xy}
\usepackage{geometry}
\usepackage[dvips]{graphicx, color}
\usepackage{mathrsfs}
\geometry{left=2.7cm, right=2.7cm, bottom=1 in , top=1.3in,  includefoot}

\author{Shoji Yokura}

\address
{Graduate School of Science and Engineering,
Kagoshima University, 21-35 Korimoto 1-chome, Kagoshima 890-0065, Japan}
\email {yokura@sci.kagoshima-u.ac.jp}
\title 
{A bi-variant algebraic cobordism via correspondences}

\thanks {
\quad \emph{keywords} : (co)bordism, algebraic cobordism, algebraic cobordism of bundles, correspondence \\
\quad \emph{Mathematics Subject Classification 2000}: 55N35, 55N22, 14C17, 14C40, 14F99, 19E99}

\numberwithin{equation}{section}
\newtheorem{thm}[equation]{Theorem}
\newtheorem{pro}[equation]{Proposition}
\newtheorem{prob}[equation]{Problem}
\newtheorem{cor}[equation]{Corollary}
\newtheorem{lem}[equation]{Lemma}
\theoremstyle{definition}

\newtheorem{defn}[equation]{Definition}

\newtheorem{rem}[equation]{Remark}

\def\alp{\alpha}
\def\be{\beta}
\def\jeden{1\hskip-3.5pt1}

\def\cal{\mathcal}
\def\ga{\gamma}

\def \bB{\mathbb B}

\def \bM{\mathbb M}

\def\bZ{\mathbb Z}

\def\op{\operatorname}
\def\alp{\alpha}
\def\be{\beta}
\def\jeden{1\hskip-3.5pt1}

\def\Cal{\mathscr}
\def\ga{\gamma}

\def \bB{\mathbb B}

\def \bM{\mathbb M}

\def\bZ{\mathbb Z}

\def\op{\operatorname}

\newcommand{\maru}[1]{\ooalign{
\hfil\resizebox{.8\width}{\height}{#1}\hfil
\crcr
\raise.1ex\hbox{\large$\bigcirc$}}}

\newcommand{\Maru}[1]{\ooalign{
\hfil\resizebox{.6\width}{\height}{#1}\hfil
\crcr
\raise.1ex\hbox{\LARGE$\bigcirc$}}}

\newcommand{\MMaru}[1]{\ooalign{
\hfil\resizebox{.5\width}{\height}{#1}\hfil
\crcr
\raise.1ex\hbox{\Huge$\bigcirc$}}}

\newcommand{\MMMaru}[1]{\ooalign{
\hfil\resizebox{.4\width}{\height}{#1}\hfil
\crcr
\raise.1ex\hbox{\Huge$\bigcirc$}}}

\newcommand{\MMMMaru}[1]{\ooalign{
\hfil\resizebox{.3\width}{\height}{#1}\hfil
\crcr
\raise.1ex\hbox{\Huge$\bigcirc$}}}

\begin{document} 
 
\begin{abstract} 
A bi-variant theory $\mathbb B(X,Y)$ defined for a pair $(X,Y)$ is a theory satisfying properties similar to those of Fulton--MacPherson's bivariant theory $\mathbb B(X \xrightarrow f Y)$  defined for a morphism $f:X \to Y$. In this paper, using correspondences we construct a bi-variant algebraic cobordism $\Omega^{*,\sharp}(X, Y)$ such that $\Omega^{*,\sharp}(X, pt)$ is isomorphic to Lee--Pandharipande's algebraic cobordism of vector bundles $\Omega_{-*,\sharp}(X)$. In particular, $\Omega^{*}(X, pt)=\Omega^{*, 0}(X, pt)$ is isomorphic to Levine--Morel's algebraic cobordism $\Omega_{-*}(X)$. Namely, $\Omega^{*,\sharp}(X, Y)$ is \emph{a bi-variant version} of Lee--Pandharipande's algebraic cobordism of bundles $\Omega_{*,\sharp}(X)$.
\end{abstract} 

\maketitle

\section{Introduction}\label{intro} 

This is a continuation of our previous works \cite{Yo-NYJM} and \cite{Yokura-enriched} 
and also partially motivated by a recent book \cite{GR1} by D. Gaitsgory and N. Rozenblyum. 

In \cite{FM} W. Fulton and R. MacPherson have introduced \emph{bivariant theory} $\mathbb B(X \xrightarrow f Y)$  with an aim to deal with Riemann--Roch type theorems for singular spaces and to unify them. $\mathbb B_*(X):= \mathbb B^{-*}(X \xrightarrow {} pt)$ becomes a covariant functor and $\mathbb B^*(X):= \mathbb B^{*}(X \xrightarrow {\op{id}_X} X)$ 
 a contravariant functor. In this sense  $\mathbb B(X \xrightarrow f Y)$ is called a \emph{bivariant} theory. 
 In \cite{Yokura-obt} (cf. \cite{Yokura-obt2} and \cite{Yo-EJM}) the author introduced \emph{an oriented bivariant theory} and \emph{a universal oriented bivariant theory} in order to eventually construct a bivariant algebraic cobordism $\Omega^*(X \xrightarrow f Y)$ which is supposed to be a bivariant-theoretic version of Levine--Morel's algebraic cobordism\footnote{It is called ``cobordism", but it is a theory of ``bordism".} $\Omega_*(X)$ in such a way that the covariant part $\Omega^{-*}(X \xrightarrow {} pt)$ becomes isomorphic to Levine--Morel's algebraic cobordism $\Omega_*(X)$, thus $\Omega^*(X \xrightarrow {\op{id}_X} X)$ would become really a new contravariant ``cobordism". 
 
Our general universal bivariant theory $\mathbb M^{\mathcal C}_{\mathcal S}(X \xrightarrow f Y)$ defined on a category $\mathscr C$, equipped with a class $\mathcal C$ of \emph{confined} morphisms, a class $\mathcal S$ of \emph{specialized} morphisms and a class $\mathcal Ind$ of independent squares, is defined to be the free abelian group generated by the set of isomorphism classes of confined morphisms $c:W \to X$ such that \emph{the composite of $c$ and $f$ is a specialized morphism}:
\begin{equation}\label{tri}
\xymatrix{
& W \ar[dl]_{\mathcal C \ni \, \, c} \ar[dr]^{f \circ c \, \, \in \mathcal S}_{\circlearrowleft \quad }  &\\
X \ar[rr]_f && Y.
}
\end{equation}
Here we note that confined and specialized morphisms are both \emph{closed under composition and base change} and \emph{the identity morphisms are defined to be both confined and specialized}. In the case of the category of complex algebraic varieties, a proper morphism is confined and a smooth morphism is specialized and in \cite{Yokura-obt} we consider the universal bivariant theory $\mathbb M^{prop}_{sm}(X \xrightarrow f Y)$ where $prop=\mathcal C$ is the class of proper morphisms and $sm =\mathcal S$ the class of smooth morphisms, in order to construct the above-mentioned bivariant algebraic cobordism $\Omega^*(X \xrightarrow f Y)$. If the coproduct $\sqcup$ is well-defined in a category $\Cal C$, then we can consider the Grothendieck group $\mathbb M^{\mathcal C}_{\mathcal S}(X \xrightarrow f Y)^+$, which is the quotient group $\mathbb M^{\mathcal C}_{\mathcal S}(X \xrightarrow f Y)$ modulo the relations $[W \xrightarrow h X] +[V \xrightarrow k X] = [W \sqcup V \xrightarrow {h \sqcup k} X]$.

In \cite{An} (also see his Ph.D Thesis \cite{An-Th}) T. Annala considered the universal bivariant theory  $\mathbb M^{prop}_{qusm}(X \xrightarrow f Y)^+ \otimes \mathbb L$ where $\mathbb L$ is the Lazard ring and $qusm = \mathcal S$ is the class of  quasi-smooth morphisms in the category of derived schemes, instead of considering the above $\mathbb M^{pop}_{sm}(X \xrightarrow f Y)^+$. A quasi-smooth morphism is closed under composition and base change. Note that a local complete intersection (abbr., $\ell.c.i.$) morphism is \emph{not necessarily closed under base change} in the classical category of algebraic schemes, but it \emph{is closed under base change in the category of derived schemes} since it is quasi-smooth. Then, by considering some bivariant ideal $\langle R^{\op{LS}} \rangle (X \xrightarrow f Y)$ of $\mathbb M^{prop}_{qusm}(X \xrightarrow f Y)^+ $, he defined the quotient group
\begin{equation}
\frac {\mathbb {M}_{qusm}^{prop}(X \xrightarrow f Y)^+ \otimes \mathbb L}{\langle R^{\op{LS}} \rangle ((X \xrightarrow f Y) } =:\Omega^*(X \xrightarrow f Y),
\end{equation}
which is his \emph{bivariant derived algebraic cobordism $\Omega^*(X \xrightarrow f Y)$}. It turns out that $\Omega^*(X \to pt)$ is isomorphic to the derived algebraic cobordism $d\Omega_*(X)$ of Lowrey--Sch\"urg \cite{LS}, which is isomorphic to Levine--Morel's algebraic cobordism $\Omega_*(X)$. Thus T. Annala has succeeded in constructing a bivariant-theoretic version of Levine--Morel's algebraic cobordism.

A motivation of considering $\mathbb M^{prop}_{sm}(X \xrightarrow f Y)$, i.e., the form (\ref{tri}) above with $prop=\mathcal C$ and $sm=\mathcal S$,  is that cobordism cycles, which are key ingredients of Levine--Morel's algebraic cobordism $\Omega_*(X)$, are of the form $[W \xrightarrow p X; L_1, \cdots, L_r]$ where $p:W \to X$ is a \emph{proper} morphism, $W$ is \emph{smooth} and $L_i$'s are line bundles over $W$. Namely, putting aside line bundles $L_i$'s, the data of \emph{a proper morphism $W \xrightarrow p X$ with smooth $W$} is the same as the above commutative diagram (\ref{tri}) with $Y=pt$ a point, $f=a_X:X \to pt$ and $a_W=a_X \circ p:W \to pt$, the morphisms to a point, such that $c:W \to X$ is \emph{proper} and $a_W:W \to pt$ is \emph{smooth}. Namely, it is the left commutative diagram below:
\begin{equation}\label{ob-0}
\xymatrix{
& W \ar[dl]_p \ar[dr]^{a_W}_{\circlearrowleft \quad } &\\
X \ar[rr]_{a_X} && pt.
} \qquad \Longleftrightarrow \qquad 
\xymatrix{
& W \ar[dl]_p \ar[dr]^{a_W} &\\
X && pt.
} 
\end{equation}
However, it is clear that the left commutative diagram is actually the same as the right diagram (without making it commutative via the map $a_X:X \to pt$). A general diagram $X \xleftarrow p W \xrightarrow s Y$ is called a \emph{correspondence}. Then the free abelian group generated by the isomorphism classes of such correspondences  $X \xleftarrow p W \xrightarrow s Y$ with proper $p$ and smooth $s$ is denoted by $\mathbb M^{prop}_{sm}(X, Y)$, similarly to $\mathbb M^{prop}_{sm}(X \to Y)$. When $Y=pt$ is a point, we have the canonical isomorphism $\mathbb M^{prop}_{sm}(X \to pt) \cong \mathbb M^{prop}_{sm}(X, pt)$. In a general situation we have ``forgetting the morphism $f$", $\frak F: \mathbb M^{prop}_{sm}(X \xrightarrow f Y) \hookrightarrow \mathbb M^{prop}_{sm}(X, Y)$, which is an embedding or a monomorphism, defined by $\frak F([W \xrightarrow p X]):=[X \xleftarrow p W \xrightarrow {f \circ p} Y]$:
\begin{equation}\label{forget-1}
\xymatrix{
& W \ar[dl]_p \ar[dr]^{f \circ p}_{\circlearrowleft \quad }  &\\
X \ar[rr]_f &&  Y
} \overset {\text{forget the morphism $f$ }}{======\Longrightarrow } 
\xymatrix{
& W \ar[dl]_p \ar[dr]^{f \circ p} &\\
X  &&  Y
}
\end{equation}
Here we note that if $f:X \to Y$ is not surjective, then there does not exist a proper morphism $p:W \to X$ such that the composite $f \circ p:W \to Y$ is smooth, thus $\mathbb M^{prop}_{sm}(X \xrightarrow f Y)=0$, however $\mathbb M^{prop}_{sm}(X, Y)  \not =0$. As explained above,  in \cite{An} Annala overcomes this kind of drawback $\mathbb M^{prop}_{sm}(X \xrightarrow f Y)=0$ by considering  \emph{quasi-smooth morphisms in the category of derived schemes}.

In \cite{AY} Annala and the author constructed a bivariant theory $\Omega^{*, \sharp}(X \xrightarrow f Y)$, which is generated by the isomorphism classes of $[W \xrightarrow p X; E]$ where $p$ is proper and the composite $f \circ p$ is quasi-smooth and $E$ is a vector bundle over $W$. $\sharp$ refers to the rank of a vector bundle $E$. $\Omega^{*, \sharp}(X \to pt)$ is isomorphic to Lee--Pandharipande's algebraic cobordism of vector bundles $\Omega_{*,\sharp}(X)$ \cite{LP}. Thus  $\Omega^{*, \sharp}(X \xrightarrow f Y)$ is a bivariant-theoretic version of $\Omega_{*,\sharp}(X)$.

In \S \ref{bi-variant} we show that there is a \emph{bi-variant}\footnote{In order not to be confused with a \emph{bivariant} theory in the sense of Fulton--MacPherson, we use ``bi-variant" in \cite{Yo-NYJM}.} algebraic cobordim $\Omega^{*, \sharp}(X,Y)$, using isomorphism classes of correspondences $[X \xleftarrow p W \xrightarrow s Y; E]$ with vector bundles $E$ over $W$, and we have the canonical homomorphism``forgetting the morphism $f$":
$$\frak F: \Omega^{*, \sharp}(X \xrightarrow f Y) \to \Omega^{*, \sharp}(X,Y)$$ 
defined by $\frak F([W \xrightarrow p X; E]):=[X \xleftarrow p W \xrightarrow {f \circ p} Y; E]$.
$\Omega^{*, \sharp}(X,pt)$ is isomorphic to Lee--Pandharipande's algebraic cobordism $\Omega_{*,\sharp}(X)$, hence $\Omega^{*, \sharp}(X,Y)$ is \emph{a bi-variant version} of Lee--Pandharipande's algebraic cobordism $\Omega_{*,\sharp}(X)$. In the case when we consider $E=\bf 0$ the zero bundles, $\Omega^{*}(X,Y) = \Omega^{*, 0}(X,Y) $ is \emph{a bi-variant version} of Levine--Morel's algebraic cobordism $\Omega_{*}(X)$.

For considering the above bi-variant theory $\Omega (X,Y)$, there are a few more motivations other than the above very simple observation  (\ref{ob-0}):
\begin{enumerate}
\item Speaking of a bivariant theory, there is another bivariant theory, which is called a bivariant $K$-theory or $KK$-theory $K(X,Y)$ due to G. Kasparov \cite{Ka}. This is well-known and has been studied very well in $C^*$-algebra and operator theories. In \cite{EM3} (cf. \cite{CS0, CS}) H. Emerson and R. Meyer described $K(X,Y)$, using correspondences. \emph{Hence it would be reasonable or natural to consider such a $KK$-theory type ``bivariant" theory $\Omega(X,Y)$, instead of $\Omega(X \to Y)$ in the sense of Fulton--MacPherson.}
\item Since we have the natural homomorphism $\frak F: \mathbb M^{prop}_{qusm}(X \xrightarrow f Y)^+ \to \mathbb M^{prop}_{qusm}(X, Y)^+$, it would be quite natural to think that there must be some ``bi-variant" version $\Omega^{*, \sharp}(X,Y)$, using $\mathbb M^{prop}_{qusm}(X, Y)^+$, corresponding to $\Omega^{*, \sharp}(X \xrightarrow f Y)$ which is constructed by using $\mathbb M^{prop}_{qusm}(X \xrightarrow f Y)^+$:
$$
\xymatrix
{
\mathbb M^{prop}_{qusm}(X \xrightarrow f Y)^+ \ar[rr]^{\frak F}  \ar[d] && \mathbb M^{prop}_{qusm}(X, Y)^+ \ar[d] \\
\Omega^{*, \sharp}(X \xrightarrow f Y) \ar@{~>}[d] \ar[rr]^{\frak F} && \Omega^{*, \sharp}(X,Y) \ar@{~>}[d]\\
\Omega^{*, \sharp}(X \to pt) \ar[rr]_{\cong}^{\frak F} && \Omega^{*, \sharp}(X,pt). 
}
$$
The bottom isomorphism may be expected from (\ref{ob-0}) above.
\item Levine--Morel's algebraic cobordism $\Omega_*(X)$ satisfies the following:
\begin{itemize}
\item For a proper morphism $f:X \to Y$ we have the pushforward $f_*:\Omega_*(X) \to \Omega_*(Y)$, which is covariantly functorial,
\item For a smooth morphism $g:X \to Y$ we have the pullback $g^*:\Omega_*(Y) \to \Omega_*(X)$, which is contravariantly functorial, 
\item For a fiber square 
$$\CD
X' @> g' >> X \\
@V f' VV @VV f V\\
Y' @>> g > Y, \endCD
$$
where $f$ is proper and $g$ is smooth, thus so are $f'$ and $g'$, respectively, we have 
$$f'_* \circ (g')^* = g^* \circ f_*:\Omega_*(X) \to \Omega_*(Y')$$
which is called \emph{base change isomorphism} or \emph{Beck--Chevalley condition}.
\end{itemize}
Such a functor is called is a ``bi-variant" functor, e.g., see \cite[Part III Categories of Correspondences, \S 1.1, p.271 and \S 0.1.1., p.285]{GR1}. In order to encode such a bi-variant functor, in \cite{GR1} D. Gaitsgory and N. Rozenblyum consider a category $\mathcal C$ (with finite limits) equipped with two classes of morphisms $vert$ and $horiz$ (both \emph{closed under composition and base change}) and consider  correspondences written as follows:
\begin{equation}\label{v-h}
\xymatrix{
c_{0,1}\ar[d]_f \ar[r]^g & c_0\\
c_1
}
\end{equation}
where $f$ is $vert$ and $g$ is $horiz$. For such a correspondence they consider a pushforward $\Phi(f): \Phi(c_{0,1}) \to \Phi(c_1)$ and a pullback $\Phi^!(g): \Phi(c_0) \to \Phi(c_{0,1})$ for some bi-variant functor $\Phi$. Using these, they consider the composition $\Phi(f) \circ \Phi^!(g): \Phi(c_0) \to \Phi(c_1)$, which is considered as a functor defined on the category of correspondences, i.e., a morphism from $c_0$ to $c_1$ is this correspondence. They use such correspondences in order to encode a bi-variant functor with values in a $\infty$-category and furthermore to encode Grothendieck six-functor formalism. As the above diagram (\ref{v-h}) indicates, $vert$ and $horiz$ clearly come from ``vertical" and ``horizontal", respectively, however their real roles or properties are exactly the same as those of \emph{confined} and \emph{specialized}, respectively. Hence, it would be \emph{reasonable or natural to consider some theories similar to a bivariant theory in the sense of Fulton--MacPherson and/or our bi-variant theory in Gaitsgory-Rozenblyum's study of correspondences.} For example, in \cite{Abe} T. Abe introduces a kind of bivariant theory (in the sense of Fulton--MacPherson, but missing the structure of bivariant product) with values in an $\infty$-category, using correspondences.
 \end{enumerate}

In \S \ref{FM-BT} we make a quick review of Fulton--MacPherson's bivariant theory \cite{FM} and the author's universal bivariant theory \cite{Yokura-obt} (cf. \cite{Yokura-obt2}). In \S \ref{cob-bi} we recall cobordism bicycles of vector bundles $[X \xleftarrow p W \xrightarrow s Y; E]$ and their properties, which are key ingredients for our bi-variant theory $\Omega^{*, \sharp}(X,Y)$. In \S \ref{main} we define our bi-variant theory 
$$\Omega^{*, \sharp}(X,Y):= \frac{\mathcal M^{*, \sharp}(X,Y)^+}{\langle \mathcal R^{\op{LS}} \rangle (X,Y)},$$
after defining a bi-variant ideal $\langle \mathcal R^{\op{LS}}\rangle (X,Y)$, 
such that  $\Omega^{*, \sharp}(X,pt) \cong \Omega_{-*,\sharp}(X)$ which is Lee--Pandharipande's algebraic cobordism of vector bundles \cite{LP}. In the final section \S \ref{dis} we discuss a bit some possible generalized versions of Annala's bivariant algebraic cobordism $\Omega^*(X \to Y)$ and our bi-variant theory $\Omega^*(X,Y)$.

\section {Fulton--MacPherson's bivariant theory and a universal bivariant theory}\label{FM-BT}

We make a quick review of Fulton--MacPherson's bivariant theory \cite {FM} (also see \cite{Fulton-book}) and the author's universal bivariant theory \cite{Yokura-obt} (also see \cite{Yokura-obt2}). 

\subsection{Fulton--MacPherson's bivariant theory} Let $\Cal C$ be a category which has a final object $pt$ and on which the fiber product or fiber square is well-defined. Also we consider a class of morphisms, called ``confined morphisms" (e.g., proper morphisms, projective morphisms, in algebraic geometry), which are \emph{closed under composition and base change and contain all the identity morphisms}, and a class of fiber squares, called ``independent squares" (or ``confined squares", e.g., ``Tor-independent" in algebraic geometry, a fiber square with some extra conditions required on morphisms of the square), which satisfy the following:

(i) if the two inside squares in  
$$\CD
X''@> {h'} >> X' @> {g'} >> X \\
@VV {f''}V @VV {f'}V @VV {f}V\\
Y''@>> {h} > Y' @>> {g} > Y \endCD
\quad \quad \qquad \text{or} \qquad \quad \quad 
\CD
X' @>> {h''} > X \\
@V {f'}VV @VV {f}V\\
Y' @>> {h'} > Y \\
@V {g'}VV @VV {g}V \\
Z'  @>> {h} > Z \endCD
$$
are independent, then the outside square is also independent,

(ii) any square of the following forms are independent:
$$
\xymatrix{X \ar[d]_{f} \ar[r]^{\op {id}_X}&  X \ar[d]^f & & X \ar[d]_{\op {id}_X} \ar[r]^f & Y \ar[d]^{\op {id}_Y} \\
Y \ar[r]_{\op {id}_X}  & Y && X \ar[r]_f & Y}
$$
where $f:X \to Y$ is \emph{any} morphism. 

\begin{rem}Given an independent square, 
its transpose is \emph{not necessarily} independent. For example, let us consider the category of topological spaces and continuous maps. Let \emph{any} map be confined, and we allow a fiber square
$$\CD
X' @> {g'} >> X \\
@V {f'}VV @VV {f}V\\
Y' @>> {g} > Y \endCD
$$ 
to be \emph{independent only if $g$ is proper} (hence $g'$ is also proper). Then its transpose is \emph{not independent unless 
$f$ is proper}. (Note that the pullback of a proper map by any continuous map is proper, because ``proper" is equivalent to ``universally closed", i.e., the pullback by any map is closed.)
\end{rem}

A bivariant theory $\bB$ on a category $\Cal C$ with values in the category of graded abelian groups is an assignment to each morphism
$ X  \xrightarrow{f} Y$
in the category $\Cal C$ a graded abelian group (in most cases we ignore the grading )
$\bB(X  \xrightarrow{f} Y)$
which is equipped with the following three basic operations. The $i$-th component of $\bB(X  \xrightarrow{f} Y)$, $i \in \bZ$, is denoted by $\bB^i(X  \xrightarrow{f} Y)$.
\begin{enumerate}
\item {\bf Product}: For morphisms $f: X \to Y$ and $g: Y
\to Z$, the product operation
$$\bullet: \bB^i( X  \xrightarrow{f}  Y) \otimes \bB^j( Y  \xrightarrow{g}  Z) \to
\bB^{i+j}( X  \xrightarrow{gf}  Z)$$
is  defined.

\item {\bf Pushforward}: For morphisms $f: X \to Y$
and $g: Y \to Z$ with $f$ \emph {confined}, the pushforward operation
$$f_*: \bB^i( X  \xrightarrow{gf} Z) \to \bB^i( Y  \xrightarrow{g}  Z) $$
is  defined.

\item {\bf Pullback} : For an \emph{independent} square \qquad $\CD
X' @> g' >> X \\
@V f' VV @VV f V\\
Y' @>> g > Y, \endCD
$

the pullback operation
$$g^* : \bB^i( X  \xrightarrow{f} Y) \to \bB^i( X'  \xrightarrow{f'} Y') $$
is  defined.
\end{enumerate}
An element $\alp \in \bB(X \xrightarrow f Y)$ is sometimes expressed as follows:
\[
\xymatrix
{
X \ar[rr]_f^{\maru{$\alp$}} && Y
} 
\]
These three operations are required to satisfy the following seven compatibility 
axioms (\cite [Part I, \S 2.2]{FM}):

\begin{enumerate}
\item[($A_1$)] {\bf Product is associative}: for $X \xrightarrow f Y  \xrightarrow g Z \xrightarrow h  W$ with $\alp \in \bB(X \xrightarrow f Y),  \be \in \bB(Y \xrightarrow g Z), \ga \in \bB(Z \xrightarrow h W)$,
$$(\alp \bullet\be) \bullet \ga = \alp \bullet (\be \bullet \ga).$$
\item[($A_2$)] {\bf Pushforward is functorial} : for $X \xrightarrow f Y  \xrightarrow g Z \xrightarrow h  W$ with $f$ and $g$ confined and $\alp \in \bB(X \xrightarrow {h\circ g\circ f} W)$
$$(g\circ f)_* (\alp) = g_*(f_*(\alp)).$$
\item[($A_3$)] {\bf Pullback is functorial}: given independent squares
$$\CD
X''@> {h'} >> X' @> {g'} >> X \\
@VV {f''}V @VV {f'}V @VV {f}V\\
Y''@>> {h} > Y' @>> {g} > Y \endCD
$$
$$(g \circ h)^* = h^* \circ g^*.$$
\item[($A_{12}$)] {\bf Product and pushforward commute}: for $X \xrightarrow f Y  \xrightarrow g Z \xrightarrow h  W$ with $f$ confined and $\alp \in \bB(X \xrightarrow {g \circ f} Z),  \be \in \bB(Z \xrightarrow h W)$,
$$f_*(\alp \bullet\be)  = f_*(\alp) \bullet \be.$$
\item[($A_{13}$)] {\bf Product and pullback commute}: given independent squares
$$\CD
X' @> {h''} >> X \\
@V {f'}VV @VV {f}V\\
Y' @> {h'} >> Y \\
@V {g'}VV @VV {g}V \\
Z'  @>> {h} > Z \endCD
$$
with $\alp \in \bB(X \xrightarrow {f} Y),  \be \in \bB(Y \xrightarrow g Z)$,
$$h^*(\alp \bullet\be)  = {h'}^*(\alp) \bullet h^*(\be).$$
\item[($A_{23}$)] \label{push-pull}{\bf Pushforward and pullback commute}: given independent squares
$$\CD
X' @> {h''} >> X \\
@V {f'}VV @VV {f}V\\
Y' @> {h'} >> Y \\
@V {g'}VV @VV {g}V \\
Z'  @>> {h} > Z \endCD
$$
with $f$ confined and $\alp \in \bB(X \xrightarrow {g\circ f} Z)$,
$$f'_*(h^*(\alp))  = h^*(f_*(\alp)).$$
\item[($A_{123}$)] {\bf Projection formula}: given an independent square with $g$ confined and $\alp \in \bB(X \xrightarrow {f} Y),  \be \in \bB(Y' \xrightarrow {h \circ g} Z)$
$$\CD
X' @> {g'} >> X \\
@V {f'}VV @VV {f}V\\
Y' @>> {g} > Y @>> h >Z \\
\endCD
$$
and $\alp \in \bB(X \xrightarrow {f} Y),  \be \in \bB(Y' \xrightarrow {h \circ g} Z)$,
$$g'_*(g^*(\alp) \bullet \be)  = \alp \bullet g_*(\be).$$
\end{enumerate}

We also assume that $\bB$ has units:

\underline {Units}: $\bB$ has units, i.e., there is an element $1_X \in \bB^0( X  \xrightarrow{\op {id}_X} X)$ such that $\alp \bullet 1_X = \alp$ for all morphisms $W \to X$ and all $\alp \in \bB(W \to X)$, such that $1_X \bullet \beta = \beta $ for all morphisms $X \to Y$ and all $\beta \in \bB(X \to Y)$, and such that $g^*1_X = 1_{X'}$ for all $g: X' \to X$. 

\underline {Commutativity}\label{commutativity}: $\bB$ is called \emph{commutative} if whenever both
$$\CD
W @> {g'} >> X \\
@V {f'}VV @VV {f}V\\
Y @>> {g} > Z  \\
\endCD  
\quad \quad \text{and} \quad \quad 
\CD
W @> {f'} >> Y \\
@V {g'}VV @VV {g}V\\
X @>> {g} > Z \\
\endCD  
$$
are independent squares with $\alp \in \bB(X \xrightarrow f Z)$ and $\be \in \bB(Y \xrightarrow g Z)$,
$$g^*(\alp) \bullet \be = f^*(\be) \bullet \alp .$$
Let $\bB, \bB'$ be two bivariant theories on a category $\Cal C$. A {\it Grothendieck transformation} from $\bB$ to $\bB'$, $\ga : \bB \to \bB'$
is a collection of homomorphisms
$\bB(X \to Y) \to \bB'(X \to Y)$
for a morphism $X \to Y$ in the category $\Cal C$, which preserves the above three basic operations: 
\begin{enumerate}
\item $\ga (\alp \bullet_{\bB} \be) = \ga (\alp) \bullet _{\bB'} \ga (\be)$, 
\item $\ga(f_{*}\alp) = f_*\ga (\alp)$, and 
\item $\ga (g^* \alp) = g^* \ga (\alp)$. 
\end{enumerate}
A bivariant theory unifies both a covariant theory and a contravariant theory in the following sense:

$\bB_*(X):= \bB^{-*}(X \to pt)$ becomes a covariant functor for {\it confined}  morphisms and 

$\bB^*(X) := \bB^*(X  \xrightarrow{id}  X)$ becomes a contravariant functor for {\it any} morphisms. 
\noindent
A Grothendieck transformation $\ga: \bB \to \bB'$ induces natural transformations $\ga_*: \bB_* \to \bB_*'$ and $\ga^*: \bB^* \to {\bB'}^*$.

\begin{defn}\label{grading}
As to the grading, $\bB_i(X):= \bB^{-i}(X  \to pt)$ and
$\bB^j(X):= \bB^j(X  \xrightarrow{id}  X)$.
\end{defn}
\begin{defn}\label{canonical}(\cite[Part I, \S 2.6.2 Definition]{FM}) Let $\mathcal S'$ be a class of maps in $\Cal V$, closed under composition\footnote{In the case of confined maps, we require that the pullback of a confined map is confined. For this class $\mathcal S'$ we do not require the stability of pullback. For example, in \cite{FM} the class of local complete intersection (abbr. $\ell.c.i$) morphisms is considered as such a class, and the pullback of an $\ell.c.i.$ morphism is not necessarily a $\ell.c.i.$ morphism.} and containing all identity maps. Suppose that to each $f: X \to Y$ in $\mathcal  S'$ there is assigned an element
$\theta(f) \in \bB(X  \xrightarrow {f} Y)$ satisfying that
\begin{enumerate}
\item [(i)] $\theta (g \circ f) = \theta(f) \bullet \theta(g)$ for all $f:X \to Y$, $g: Y \to Z \in \mathcal S'$ and
\item [(ii)] $\theta(\op {id}_X) = 1_X $ for all $X$ with $1_X \in \bB^*(X):= \bB^*(X  \xrightarrow{\op {id}_X} X)$ the unit element.
\end{enumerate}
Then $\theta(f)$ is called an {\it canonical orientation} of $f$. 
\end{defn} 

\noindent
{\bf Gysin homomorphisms} \label{gysin}:
Note that such a canonical orientation makes the covariant functor $\bB_*(X)$ a contravariant functor for morphisms in $\mathcal S'$, and also makes the contravariant functor $\bB^*$ a covariant functor for morphisms in $\mathcal  C \cap \mathcal S'$: Indeed, 
\begin{enumerate}
\item As to the covariant functor $\bB_*(X)$: For a morphism $f: X \to Y \in \mathcal S'$ and a canonical orientation $\theta$ on $\mathcal S'$, the following {\it Gysin (pullback) homomorphism}
$$f^!: \bB_*(Y) \to \bB_*(X) \quad \text {defined by} \quad  f^!(\alp) :=\theta(f) \bullet \alp$$
 is {\it contravariantly functorial}. 
\item As to contravariant functor $\bB^*$: For a fiber square (which is an independent square by hypothesis)
$$\CD
X @> f >> Y \\
@V {\op {id}_X} VV @VV {\op {id}_Y}V\\
X @>> f > Y, \endCD
$$
where $f \in \mathcal C \cap  \mathcal S'$, the following {\it Gysin (pushforward) homomorphism}
$$f_!: \bB^*(X) \to \bB^*(Y) \quad \text {defined by} \quad
f_!(\alp) := f_*(\alp \bullet \theta (f))$$
is {\it covariantly functorial}.
\end{enumerate}

 The above Gysin homomorphisms are sometimes denoted by $f_{\theta}^!$ and $f^{\theta}_!$ respectively  in order to make it clear that the canonical orientation $\theta$ is used.

\subsection {A universal bivariant theory }\label{UBT}

\begin{def}\label{nice} (i) Let $\mathcal S$ be another class of morphisms called ``specialized morphisms" (e.g., smooth morphisms in algebraic geometry) in $\Cal C$ , which is \emph{closed under composition and base change and contains all identity morphisms.} 

(ii) Let $\cal S$ be as in (i).
Furthermore, if the orientation $\theta$ on $\cal S$ satisfies that for an independent square with $f \in \cal S$
$$
\CD
X' @> g' >> X\\
@Vf'VV   @VV f V \\
Y' @>> g > Y
\endCD
$$
$\theta (f') = g^* \theta (f)$, i.e., the orientation $\theta$ preserves the pullback operation, then we call $\theta$ a {\it nice canonical orientation} of $\bB$. 
\end{def}
We also assume that our category $\mathcal V$ satisfies that any fiber square
$$
\CD
P' @> {} >> P\\
@Vf'VV   @VV f V \\
Q' @>> {} > Q
\endCD
$$
with $f$ being confined, i.e., $f \in \mathcal C$, is an independent square. In \cite{Yokura-obt} this condition is called \emph{``$\mathcal C$-independence"}.

\begin{thm}\label{ubt}(\cite[Theorem 3.1]{Yokura-obt})
\label{UBT} \index{universal! bivariant theory} Let  $\Cal C$ be a category with a class $\cal C$ of confined morphisms, a class $\mathcal Ind$  of independent squares and a class $\cal S$ of specialized morphisms.  We define 
$\bM^{\cal C} _{\cal S}(X  \xrightarrow{f}  Y)$
to be the free abelian group generated by the set of isomorphism classes of confined morphisms $h: W \to X$  such that the composite of  $h$ and $f$ is a specialized morphism:
$h \in \cal C$ and $f \circ h: W \to Y \in \cal S.$
\begin{enumerate}
\item  The association $\bM^{\cal C} _{\cal S}$ is a bivariant theory if the three bivariant operations are defined as follows:
\begin{enumerate}
\item  {\bf Product}: For morphisms $f: X \to Y$ and $g: Y
\to Z$, the product 

\hspace{3cm} $\bullet: \bM^{\cal C} _{\cal S} ( X  \xrightarrow{f}  Y) \otimes \bM^{\cal C} _{\cal S} ( Y  \xrightarrow{g}  Z) \to
\bM^{\cal C} _{\cal S} ( X  \xrightarrow{gf}  Z)$

is  defined by
$[V \xrightarrow{h}  X] \bullet [W  \xrightarrow{ k}  Y]:= [V'  \xrightarrow{ h \circ {k}''}  X]$
and extended linearly, where we consider the following fiber squares
$$\CD
V' @> {h'} >> X' @> {f'} >> W \\
@V {{k}''}VV @V {{k}'}VV @V {k}VV\\
V@>> {h} > X @>> {f} > Y @>> {g} > Z .\endCD
$$
\item {\bf Pushforward}: For morphisms $f: X \to Y$
and $g: Y \to Z$ with $f$ 
confined, the pushforward

\hspace{3cm} $f_*: \bM^{\cal C} _{\cal S} ( X  \xrightarrow{gf} Z) \to \bM^{\cal C} _{\cal S} ( Y  \xrightarrow{g}  Z) $

is  defined by
$f_*\left ([V \xrightarrow{h}  X] \right) := [V  \xrightarrow{f \circ h}  Y]$
and extended linearly.

\item {\bf Pullback}: For an independent square
$$\CD
X' @> g' >> X \\
@V f' VV @VV f V\\
Y' @>> g > Y, \endCD
$$
the pullback 

\hspace{3cm} $g^* : \bM^{\cal C} _{\cal S} ( X  \xrightarrow{f} Y) \to \bM^{\cal C} _{\cal S}( X'  \xrightarrow{f'} Y') $

is  defined by
$g^*\left ([V  \xrightarrow{h}  X] \right):=  [V'  \xrightarrow{{h}'}  X']$
and extended linearly, where we consider the following fiber squares:
$$\CD
V' @> g'' >> V \\
@V {{h}'} VV @VV {h}V\\
X' @> g' >> X \\
@V f' VV @VV f V\\
Y' @>> g > Y. \endCD
$$
\end{enumerate}
\item For a specialized morphism $f:X \to Y \in \mathcal S$,

\hspace{3cm} $\theta_{\bM^{\cal C} _{\cal S}}(f) =[X \xrightarrow {\op{id}_X} X] \in \bM^{\cal C} _{\cal S}(X \xrightarrow f Y)$

is a nice canonical orientation of $\bM^{\cal C} _{\cal S}$ for $\mathcal S$.
\item (A universality of $\bM^{\cal C} _{\cal S}$) Let $\bB$ be a bivariant theory on the same category $\Cal C$ with the same class $\cal C$ of confined morphisms, the same class $\mathcal Ind$  of independent squares and the same class $\cal S$ of specialized morphisms, and let $\theta$ be a nice canonical orientation of $\bB$ for $\mathcal S$. Then there exists a unique Grothendieck transformation
$\ga_{\bB} : \bM^{\cal C} _{\cal S} \to \bB$
such that for a specialized morphism $f: X \to Y$, 
$\ga_{\bB} : \bM^{\cal C} _{\cal S}(X  \xrightarrow{f}  Y) \to \bB(X  \xrightarrow{f}  Y)$
satisfies the normalization condition that 
$\ga_{\bB}(\theta_{\bM^{\cal C} _{\cal S}}(f) ) = \theta_{\bB}(f).$
\end{enumerate}
\end{thm}
\begin{pro}\label{comm}(Commutativity\footnote{If $g^*(\alp) \bullet \be = (-1)^{\op{deg}(\alp) \op{deg}(\beta)} f^*(\beta) \bullet \alp$ holds, then it is called \emph{skew-commutative} (see \cite[Part I:Bivariant Theories, \S 2.2]{FM}).})\label{comm} The universal bivariant theory $\bM^{\cal C} _{\cal S}$ is commutative in the sense that $g^*(\alp) \bullet \be = f^*(\beta) \bullet \alp$ for a fiber square
 \[
\xymatrix{X' \ar[rr]^{g'} \ar[d]_{f'} && X \ar[d]_f^{\maru{$\alp$}}   \\
Z'\ar[rr]_g^{\maru{$\be$}} && Z 
} 
\]
\end{pro}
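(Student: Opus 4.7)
The plan is to check the identity on generators and show that both expressions reduce to the \emph{same} cobordism-like class supported on the ``double" fibre product $V\times_Z W$. So fix generators $\alpha=[V\xrightarrow{h}X]\in\bM^{\cal C}_{\cal S}(X\xrightarrow{f}Z)$ and $\beta=[W\xrightarrow{k}Z']\in\bM^{\cal C}_{\cal S}(Z'\xrightarrow{g}Z)$, so that $h,k$ are confined and $f\circ h,\ g\circ k$ are specialized; extend bilinearly at the end.

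First I would unwind the left-hand side. By definition of pullback, $g^*(\alpha)=[V'\xrightarrow{h'}X']$, where $V'=V\times_X X'\cong V\times_Z Z'$ (via the iterated fibre square identification $X'=X\times_Z Z'$). Then forming the product $g^*(\alpha)\bullet\beta$ requires the fibre square $X'\times_{Z'}W$; by another iterated-fibre-product identification this space is $X\times_Z W$, and the subsequent fibre product with $V'$ over $X'$ simplifies to $V\times_Z W$. Tracking the maps, $g^*(\alpha)\bullet\beta$ is represented by $V\times_Z W\to X'$ sending $(v,w)\mapsto (h(v),k(w))$. Next I would do the symmetric unwinding for the right-hand side: $f^*(\beta)=[P\xrightarrow{p_1}X']$ with $P=X'\times_{Z'}W\cong X\times_Z W$ (using the second independent square from the commutativity hypothesis of Definition/Assumption), and then $f^*(\beta)\bullet\alpha$ requires the fibre square $X'\times_X V\cong V\times_Z Z'$, and a final fibre product over $X'$ giving again the canonically isomorphic space $V\times_Z W$ with the same map $(v,w)\mapsto(h(v),k(w))$ to $X'$.

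Once both sides are identified with the class of the same morphism $V\times_Z W\to X'$, the final step is to verify that this morphism actually represents an element of $\bM^{\cal C}_{\cal S}(X'\xrightarrow{g\circ f'}Z)=\bM^{\cal C}_{\cal S}(X'\xrightarrow{f\circ g'}Z)$, i.e.\ that it is confined and its composition to $Z$ is specialized. Confinedness follows because $V\times_Z W\to X'=X\times_Z Z'$ is the fibre product of the confined maps $h$ and $k$, hence is obtained as a composition of base changes of confined maps, and $\cal C$ is closed under both operations. For the specialized assumption, one factors $V\times_Z W\to Z$ as $V\times_Z W\to W\xrightarrow{g\circ k}Z$, where the first arrow is the base change of the specialized map $f\circ h:V\to Z$ along $g\circ k$, and then closure of $\cal S$ under composition and base change finishes the check.

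The main subtlety I anticipate is bookkeeping: each of the two products unfolds into a sequence of two fibre squares, and one must invoke axiom $(A_3)$-style iterated-fibre-square compatibility (together with the two independent squares demanded in the commutativity hypothesis) to legitimately identify all the intermediate pullbacks with a single ``big" fibre product $V\times_Z W$ over $Z$. The canonical isomorphism of iterated fibre products is of course formal, but one must be careful that every intermediate square used is independent so that the pullback operations are defined; this is exactly what the hypothesis of both squares being independent guarantees. Once that is in place, the identity $g^*(\alpha)\bullet\beta=f^*(\beta)\bullet\alpha$ is immediate, and bilinear extension gives the result for arbitrary elements.
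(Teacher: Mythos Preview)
Your argument is correct and is exactly the natural direct verification: reduce to generators, unwind both $g^*(\alpha)\bullet\beta$ and $f^*(\beta)\bullet\alpha$ through the iterated fibre squares, and observe that both collapse to the class of the same morphism $V\times_Z W\to X'$, $(v,w)\mapsto(h(v),k(w))$. The closing checks (confined via closure of $\mathcal C$ under base change and composition, specialized via closure of $\mathcal S$ under the same) are the right ones.

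The paper itself states this proposition without proof, treating it as a routine consequence of the explicit definitions in Theorem~\ref{UBT} (the result goes back to \cite{Yokura-obt}). So there is nothing to compare against; your write-up simply fills in the omitted verification. One minor comment: your invocation of ``axiom $(A_3)$-style iterated-fibre-square compatibility'' is a slight overstatement---the product operation in $\mathbb M^{\mathcal C}_{\mathcal S}$ uses ordinary fibre squares and does not require independence, so independence is only needed at the two pullback steps $g^*$ and $f^*$, which is precisely what the commutativity hypothesis supplies. Otherwise the bookkeeping is sound.
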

\begin{rem} Here we note that 
\begin{enumerate}
\item for a \emph{confined} morphism $f:X \to X'$ we have the pushforward $f_*:(\bM^{\cal C} _{\cal S})_*(X) \to (\bM^{\cal C} _{\cal S})_*(X')$,
\item for a \emph{specialized} morphism $g:Y \to Y'$ we have the (Gysin) pullback $g^*:(\bM^{\cal C} _{\cal S})_*(Y') \to (\bM^{\cal C} _{\cal S})_*(Y)$, where we use the above canonical orientation $\theta_{\bM^{\cal C} _{\cal S}}(g)$,
\item for a fiber square
$$\CD
X' @> g' >> X \\
@V f' VV @VV f V\\
Y' @>> g > Y, \endCD
$$
where $f$ is confined and $g$ is specialized, thus so are $f'$ and $g'$, respectively. Then we have 
$$f'_* \circ (g')^* = g^* \circ f_*:(\bM^{\cal C} _{\cal S})_*(X) \to (\bM^{\cal C} _{\cal S})_*(Y')$$
which is called \emph{base change isomorphism} or \emph{Beck--Chevalley condition}.
\end{enumerate}
Hence the functor $(\bM^{\cal C} _{\cal S})_*$ is a \emph{bi-variant functor} (e.g., see \cite[Part III Categories of Correspondences, \S 1.1, p.271 and \S 0.1.1., p.285]{GR1})
 from the category $\Cal C$ equipped with two classes of morphisms \emph{confined} and \emph{specialized} to the category of abelian groups. 
\end{rem}
\begin{rem}\label{rem-gro} Suppose that the coproduct $\sqcup$ (like the disjoint union in the category of sets, the category of topological spaces, etc.) is well-defined\footnote{Note that there is a category in which the coproduct is not defined, e.g., the category of finite groups.} in the category $\Cal C$ and $f: X_1 \sqcup  X_2 \to Y$ is defined to be confined (resp., specialized) if and only if the restrictions $f|_{X_i}:X_i \to Y$ is confined (resp., specialized). Here $f|_{X_i}:=f \circ \iota_i:X_i \to Y$ where $\iota_i:X_i \to X_1 \sqcup X_2$ is the inclusion. If we consider the following relations on $\mathbb M^{\mathcal C}_{\mathcal S}(X \to Y)$
$$[V_1 \sqcup V_2 \xrightarrow h X]=[V_1 \xrightarrow {h|_{V_1}} X] + [V_2 \xrightarrow {h|_{V_2}} X]$$
where we note that $(h_1 \sqcup h_2)|_{V_i}=(h_1 \sqcup h_2) \circ \iota_i =h_i:V_i \to X$, then we get the Grothendieck group, denoted by $\mathbb M^{\mathcal C}_{\mathcal S}(X \to Y)^+$. Or the set of isomorphism classes of $[V \to X]$ becomes a commutative monoid by the coproduct
$$[V_1 \xrightarrow {h_1} X] + [V_2 \xrightarrow {h_2} X]:=[V_1 \sqcup V_2 \xrightarrow {h_1 \sqcup h_2} X].$$
Then the Grothendieck group $\mathbb M^{\mathcal C}_{\mathcal S}(X \to Y)^+$ is the group completion of this monoid. 
\end{rem}
\section{Cobordism bicycles of vector bundles}\label{cob-bi}
In \cite{Yo-NYJM} we considered extending the notion of algebraic cobordism of vector bundles due to Y.-P. Lee and R. Pandharipande \cite{LeeP} (cf. \cite{LTY}) to ``correspondences". For later use, we give a quick recall of them.

\subsection{Cobordism bicycles of vector bundles}
\begin{defn} Let $X \xleftarrow p V \xrightarrow s Y$ be a correspondence such that 
$p$ is \emph{proper} and 
and $s$ is \emph{smooth}, and let $E$ be a complex vector bundle over $V$. Then
$(X \xleftarrow p V \xrightarrow s Y; E)$
is called a \emph{cobordism bicycle of a vector bundle}.
\end{defn}
\begin{defn}\label{bicycle} Let $(X \xleftarrow p V \xrightarrow s Y; E)$ and $(X \xleftarrow {p'} V' \xrightarrow {s'} Y; E')$ be two cobordism bicycles of vector bundles of the same rank. If there exists an isomorphism $h: V \cong V'$ such that 
\begin{enumerate}
\item $(X \xleftarrow p V \xrightarrow s Y) \cong (X \xleftarrow {p'} V' \xrightarrow {s'} Y)$ as correspondences, i.e., the following diagrams commute:
$$\xymatrix{
& V\ar [dl]_{p} \ar[dr]^{s} \ar[dd]_h &\\
X   && Y \\
 & V'\ar[ul]^{p'} \ar[ur]_{s'}&}
$$
\item $E \cong h^*E'$,
\end{enumerate}
then they are called isomorphic and denoted by
$$(X \xleftarrow p V \xrightarrow s Y; E) \cong (X \xleftarrow {p'} V' \xrightarrow {s'} Y; E').$$
\end{defn}
The isomorphism class of 
$(X \xleftarrow p V \xrightarrow s Y; E)$ is denoted by $[X \xleftarrow p V \xrightarrow s Y; E]$, which is still called a cobordism bicycle of a vector bundle.  For a fixed rank $r$ for vector bundles, the set of isomorphism classes of cobordism bicycles of vector bundles for a pair $(X,Y)$  becomes a commutative monoid by the disjoint sum:
$$[X \xleftarrow {p_1} V_1 \xrightarrow {s_1} Y; E_1] + [X \xleftarrow {p_2}  V_2 \xrightarrow {s_2} Y; E_2]
:= [X \xleftarrow {p_1+p_2} V_1 \sqcup V_2 \xrightarrow {s_1+s_2} Y; E_1 \sqcup E_2].$$
This monoid is denoted by $\Cal M^r(X,Y)$ and another grading of $[X \xleftarrow p V \xrightarrow s Y; E]$ is defined by the relative dimension of the smooth morphism $s$, denoted by $\op{dim} s$, thus by double grading, $[X \xleftarrow p V \xrightarrow s Y; E] \in \Cal   M^{n,r}(X,Y)$, where we set $-n = \op{dim} s$ and $r = \op{rank} E$.
The group completion of this monoid, i.e., the Grothendieck group, is denoted by $\Cal   M^{n,r}(X,Y)^+$. 
Using similar notations as in \S \ref{UBT}, $\Cal  M^{n,r}(X,Y)^+$ could or should be denoted by $(\mathbb M^{prop}_{sm})^{n,r}(X,Y)^+$ where $prop=\mathcal C$ is the class of proper morphisms and $sm=\mathcal S$ is the class of smooth morphisms. But, in order to avoid some messy notation, we use 
$\Cal  M^{n,r}(X,Y)^+$, mimicking the notation used in \cite{LM}.
\begin{rem}\label{grading} The reason for why we set $-n = \op{dim} s$ in the above grading is requiring that $\Cal M_{-n,r}(X,pt):= \Cal M^{n,r}(X,pt)$ in the case when $Y=pt$ and $\Cal M_{-n,r}(X,pt)$ is supposed to be the same as $\Cal M_{-n,r}(X)^{+}$ considered in Lee--Pandharipande \cite{LP}, where $-n=\op{dim} s$ is nothing but the dimension of the source variety $V$. This is just like setting $\mathbb B_i(X):=\mathbb B^{-i}(X \to pt)$. 
\end{rem}
\begin{defn}\label{bi-p}
\label{prod} For three varieties $X,Y,Z$, by using Whitney sum $\oplus$ we define the following product\footnote{In \cite{Yo-NYJM} we consider another product $\bullet_{\otimes}$ using tensor product $\otimes$. In this paper we do not consider $\bullet_{\otimes}$.}
$$\bullet : \Cal   M^{m,r}(X,Y)^+ \otimes \Cal   M^{n,k}(Y,Z)^+ \to \Cal   M^{m+n,r+k}(X,Z)^+$$
$$[X \xleftarrow p  V \xrightarrow s Y; E] \bullet [Y \xleftarrow q  W \xrightarrow t Z; F]
 : = [(X \xleftarrow p V \xrightarrow s Y) \circ (Y \xleftarrow q W \xrightarrow s Z);(q')^*E \oplus (s')^*F ]
 $$
Here we consider the following diagrams: 
$$\xymatrix{
& & (q')^*E \oplus (s')^*F \ar[d] \\
& E \ar[d]& V\times_Y W\ar [dl]_{q'} \ar[dr]^{s'} & F \ar[d]&\\
& V \ar [dl]_{p} \ar [dr]^{s} && W \ar [dl]_{q} \ar[dr]^{t}\\
X & &  Y && Z }
$$
The middle diamond square is the fiber product and
\emph{$(X \xleftarrow p V \xrightarrow s Y) \circ (Y \xleftarrow q W \xrightarrow s Z)$ is the composition of correspondences}, defined by
$(X \xleftarrow p V \xrightarrow s Y) \circ (Y \xleftarrow q W \xrightarrow s Z):= (X \, \, \xleftarrow {p \circ q'}  \, \, V \times_Y W  \, \, \xrightarrow {t \circ s'} \, \, Z)$
in the above diagram.
\end{defn}
For later use, we define the following special cobordism bicycles:
\begin{defn}\label{g-1} 
\begin{enumerate}
\item For a smooth morphism $g:Y \to Y'$, we define
$$\jeden_g:=[Y \xleftarrow {\op{id}_Y} Y \xrightarrow g Y'; {\bf 0}]  \, \,  (\in \Cal M^{- \op{dim}g,0} (Y,Y')^+),$$
\item For a proper morphism $g:Y \to Y'$, we define
$${}_g\jeden:=[Y' \xleftarrow g Y \xrightarrow {\op{id}_{Y}} Y; {\bf 0}] \, \,  (\in \Cal M^{0,0} (Y',Y)^+),$$
\item For any variety $Y$, $\jeden_Y := [Y \xleftarrow {\op{id}_{Y}} Y \xrightarrow {\op{id}_{Y}} Y; {\bf 0}] \, \,  (\in \Cal M^{0,0} (Y,Y)^+).$
\noindent
Here ${\bf 0}$ is the zero bundle over $Y$. 
\item For a vector bundle $E$ over $Y$ we define
$$[[E]]:= [Y \xleftarrow {\op{id}_Y} Y \xrightarrow {\op{id}_Y}  Y; E] .$$
If we need to emphasize the base $Y$ of the bundle $E$, then we denote it by $[[E]]_Y$. Otherwise we omit the suffix $Y$ unless there is some possible confusion on which is the base space of the bundle. Hence for the $0$-bundle over $Y$ we have $[[{\bf 0}]]= \jeden_Y.$
\item For two vector bundles $E$ and $F$ over the same base $Y$, we have
$$[[E]] \bullet [[F]] = [[E \oplus F]].$$
\end{enumerate}
\end{defn}
From now on we identify $[X \xleftarrow p V \xrightarrow s Y] = [X \xleftarrow p V \xrightarrow s Y; {\bf 0}]$. Hence we have
$$\jeden_g =[Y \xleftarrow {\op{id}_Y} Y \xrightarrow g Y'] =[Y \xleftarrow {\op{id}_Y} Y \xrightarrow g Y'; {\bf 0}], \quad {}_g\jeden =[Y' \xleftarrow g Y \xrightarrow {\op{id}_{Y}} Y]=[Y' \xleftarrow g Y \xrightarrow {\op{id}_{Y}} Y; {\bf 0}],$$
$$\jeden_Y =[Y \xleftarrow {\op{id}_{Y}} Y \xrightarrow {\op{id}_{Y}} Y]=[Y \xleftarrow {\op{id}_{Y}} Y \xrightarrow {\op{id}_{Y}} Y; {\bf 0}].$$
{\begin{rem}
We note that by Definitions \ref{g-1} and \ref{bi-p} we have
\begin{equation}
[X \xleftarrow p  V \xrightarrow s Y] = {}_p\jeden \bullet \jeden_s.
\end{equation}
In general, we can see
\begin{equation}
[X \xleftarrow p  V \xrightarrow s Y;E] = {}_p\jeden \bullet [[E]] \bullet \jeden_s.
\end{equation}
So, we have that $[X \xleftarrow p  V \xrightarrow s Y; {\bf 0}]= {}_p\jeden \bullet [[{\bf 0}]] \bullet \jeden_s = {}_p\jeden \bullet \jeden_Y \bullet \jeden_s = {}_p\jeden \bullet \jeden_s.$
\end{rem}

Now we define pushforward and pullback of cobordism bicycles of vector bundles:
\begin{defn}\label{push-pull}
\begin{enumerate} 
\item (Pushforward) \label{pp-push}
\begin{enumerate}
\item For a \emph{proper} morphism $f:X \to X'$, the (proper) pushforward \emph{acting on the first factor $X \xleftarrow{p} V$}, 
denoted by $f_*:\Cal   M^{m,r}(X,Y)^+ \to \Cal M^{m,r}(X',Y)^+$,  is defined by
$$f_*([X \xleftarrow{p} V  \xrightarrow{s} Y; E]):= [X' \xleftarrow{f \circ p} V  \xrightarrow{s} Y; E].$$
Note that $f_*(-) ={}_f \jeden \bullet (-)$.
\item For a \emph{smooth} morphism $g:Y \to Y'$, the (smooth) pushforward \emph{acting on the second  factor $V  \xrightarrow{s} Y$}, 
denoted by ${}_*g:\Cal   M^{m,r}(X,Y)^+ \to \Cal   M^{m-\op{dim}g,r}(X,Y')^+$ \footnote{In \cite{Yo-NYJM} we have $ \Cal   M^{m+\op{dim}g,r}(X,Y')^+$, instead of $\Cal   M^{m-\op{dim}g,r}(X,Y')^+$, because in \cite{Yo-NYJM} we set $n= \op{dim}s$, not $-n= \op{dim}s$, for the first grading $n$ of $\Cal   M^{n,r}(X,Y)^+$ (cf. Remark \ref{grading} above). This sign change is applied to the other situations below, as long as the relative dimensions of smooth morphisms are involved.}, is defined by
$$([X \xleftarrow{p} V  \xrightarrow{s} Y; E])\, {}_*g:= [X \xleftarrow{p} V  \xrightarrow{g \circ s} Y'; E].$$
Here we emphasize that ${}_*g$ is written on the right side of $([X \xleftarrow{p} V  \xrightarrow{s} Y; E])$ \emph{not on the left side}. 
Note that $(-) {}_*g= (-)\bullet \jeden_g.$

(Note also that $-m = \op{dim} s$ and $\op{dim} (g \circ s) = \op{dim}s + \op{dim}g = -m + \op{dim}g =- (m - \op{dim}g)$.) 
\end{enumerate}
\item (Pullback) \label{pp-pull}
\begin{enumerate}
\item \label{pp-pull-a} For a \emph{smooth} morphism $f:X' \to X$, the (smooth) pullback \emph{acting on the first factor $X \xleftarrow{p} V$}, 
denoted by $f^*:\Cal   M^{m,r}(X,Y)^+ \to \Cal   M^{m-\op{dim}f,r}(X',Y)^+$ is defined by
$$f^*([X \xleftarrow{p} V  \xrightarrow{s} Y; E]) := [X' \xleftarrow{p'} X' \times_X V \xrightarrow{s \circ f'} Y; (f')^*E].$$
Here we consider the following commutative diagram:
$$\xymatrix{
& (f')^*E \ar[d] 
& E \ar[d] \\
& X' \times_X V \ar[dl]_{p'} \ar[r]^{\qquad f'} & V \ar [dl]^{p} \ar [dr]^{s}\\
X' \ar[r]_f & X &  & Y }
$$
Note that $f^*(-) =\jeden_f \bullet (-)$.

(Note also that the left diamond is a fiber square, thus $f':X'\times_X V \to V$ is smooth and $p':X'\times_X V \to X'$ is proper. Note that 
$\op{dim} f' = \op{dim}f$ and $\op{dim}(s \circ f') = \op{dim} s + \op{dim}f' = -m + \op{dim} f = -(m -\op{dim} f) $.)
\item \label{pp-pull-b} For a \emph{proper} morphism $g:Y' \to Y$, the (proper) pullback \emph{acting on the second  factor $V  \xrightarrow{s} Y$}, 
denoted by ${}^*g:\Cal   M^{m,r}(X,Y)^+ \to \Cal   M^{m,r}(X,Y')^+$, is defined by
$$([X \xleftarrow{p} V  \xrightarrow{s} Y; E])\, {}^*g := [X \xleftarrow{p \circ g'} V \times _Y Y' \xrightarrow{s'} Y'; (g')^*E].$$
Here we consider the following commutative diagram:
$$\xymatrix{
& E \ar[d] & (g')^*E 
\ar[d] \\
& V \ar [dl]^{p} \ar [dr]_{s} & V \times _Y Y' \ar[l]_{g' \quad } \ar[dr]^{s'}\\
X &  & Y & Y' \ar[l]^g}
$$
Note that $(-) {}^*g= (-)\bullet {}_g\jeden$.

(Note that the right diamond is a fiber square, thus $s':V\times_Y Y'\to Y'$ is smooth and $g':V\times_Y Y'\to V$ is proper, and $\op{dim} s = \op{dim} s'$.)
\end{enumerate}
\end{enumerate}
\end{defn}
\begin{rem}\label{rem-push-pull} 
Since we refer to the formulas noted in the above Definition \ref{push-pull} later, we list them here again:
\begin{equation}\label{f_*}
f_*(-) ={}_f \jeden \bullet (-),
\end{equation}
\begin{equation}\label{_*}
(-) {}_*g= (-)\bullet \jeden_g,
\end{equation}
\begin{equation}\label{f^*}
f^*(-) =\jeden_f \bullet (-),
\end{equation}
\begin{equation}\label{*g}
(-) {}^*g= (-)\bullet {}_g\jeden.
\end{equation}
\end{rem}


\begin{pro}\label{proposition1}
The above three operations of product, pushforward and pullback satisfy the following properties.
\begin{enumerate}
\item[($A_1$)] {\bf Product is associative}: For three varieties $X,Y,Z, W$ we have
$$(\alp \bullet \be) \bullet \ga = \alp \bullet (\be \bullet \ga) 
.$$
where $\alp \in \Cal   M^{m,r}(X,Y)^+, \be \in \Cal   M^{n,k}(Y,Z)^+$ and $\be \in \Cal   M^{\ell,e}(Z,W)^+$, 
\item[($A_2$)] {\bf Pushforward is functorial} : 
\begin{enumerate}
\item For two proper morphisms $f_1:X \to X', f_2:X' \to X''$, we have
$$(f_2 \circ f_1)_* = (f_2)_* \circ (f_1)_*$$
where  $(f_1)_*:\Cal   M^{m,r}(X,Y)^+ \to \Cal   M^{m,r}(X',Y)^+$ and \\
\hspace{0.9cm} $(f_2)_*:\Cal   M^{m,r}(X',Y)^+ \to \Cal   M^{m,r}(X'',Y)^+$.
\item For two smooth morphisms $g_1:Y \to Y', g_2:Y' \to Y''$ we have
$${}_*(g_2 \circ g_1) = {}_*(g_1) \circ {}_*(g_2)  {}
\, \quad \text{i.e., $\alp\,{}_*(g_2 \circ g_1) = (\alp \, {}_*(g_1)) \, {}_*(g_2)$} $$
where ${}_*(g_1):\Cal   M^{m,r}(X,Y) \to \Cal   M^{m-\op{dim}g_1,r}(X,Y')$ and \\
\hspace{0.9cm}  ${}_*(g_2):\Cal   M^{m-\op{dim}g_1,r}(X,Y') \to \Cal   M^{m-\op{dim}g_1- \op{dim}g_2,r}(X,Y'')$.
\end{enumerate}
\item[($A_2$)'] {\bf Proper pushforward and smooth pushforward commute}: For a proper morphism $f:X \to X'$ and a smooth morphism $g:Y \to Y'$ we have
$${}_*g \circ f_*= f_* \circ \, {}_*g, \,\,  \text{i.e.,} \, \, (f_*\alp)\, {}_*g = f_*(\alp\, {}_*g) {} 
.$$
\item[($A_3$)] {\bf Pullback is functorial}: 
\begin{enumerate}
\item For two smooth morphisms $f_1:X \to X', f_2:X' \to X''$ we have
$$(f_2 \circ f_1)^* = (f_1)^* \circ (f_2)^*$$
where  $(f_2)^*:\Cal   M^{m,r}(X'',Y)^+ \to \Cal   M^{m - \op{dim}f_2,r}(X',Y)^+$ and \\
\hspace{1cm} $(f_1)^*:\Cal   M^{m -\op{dim}f_2,r}(X',Y)^+ \to \Cal   M^{m-  \op{dim}f_2-\op{dim}f_1,r}(X,Y)^+$. 
\item For two proper morphisms $g_1:Y \to Y', g_2:Y' \to Y''$ we have
$${}^*(g_2 \circ  g_1)  = {}^*(g_2) \circ {}^*(g_1) \, \quad \text{i.e., $\alp\,{}^*(g_2 \circ g_1) = (\alp \, {}^*(g_2)) \, {}^*(g_1)$} $$
where ${}^*(g_1):\Cal   M^{m,r}(X,Y')^+ \to \Cal   M^{m,r}(X,Y)^+$ and \\
\hspace{1cm}  ${}^*(g_2):\Cal   M^{m,r}(X,Y'')^+ \to \Cal   M^{m,r}(X,Y')^+$.
\end{enumerate}
\item[($A_3$)'] {\bf Proper pullback and smooth pullback commute}: For a smooth morphism $g:X' \to X$ and a proper morphism $f:Y' \to Y$ we have
$$g^* \circ {}^*f={}^*f \circ g^*,\,\,  \text{i.e.,} \, \, g^*(\alp \, {}^*f) = (g^*\alp)\, {}^*f .$$

\item[($A_{12}$)] {\bf Product and pushforward commute}: 
Let $\alp \in \Cal M^{m,r}(X,Y)^+$ and $\be \in \Cal   M^{n,k}(Y,Z)^+$.
 \begin{enumerate} 
\item For a proper morphism $f:X \to X'$, 
$$f_*(\alp \bullet  \be)  = (f_*\alp) \bullet \be   \quad (\in \Cal   M^{m+n,r+k}(X',Z)^+).$$
\item For a smooth morphism $g:Z \to Z'$, 
$$(\alp \bullet  \be)\, {}_*g  = \alp \bullet (\be\, {}_*g)   \quad (\in \Cal   M^{m+n-\op{dim}g,r+k}(X,Z')^+).$$
\end{enumerate}
\item[($A_{13}$)] {\bf Product and pullback commute}: Let $\alp \in \Cal   M^{m,r}(X,Y)^+$ and 

\noindent
$\be \in    \Cal   M^{n,k}(Y,Z)^+$.
 
\begin{enumerate} 
\item For a smooth morphism $f:X' \to X$,
$$f^*(\alp \bullet \be)  = (f^*\alp) \bullet \be \quad ( \in \Cal   M^{m+n- \op{dim}f,r+k}(X',Z)^+ ).$$

\item For a proper morphism $g:Z' \to Z$, 
$$(\alp \bullet \be)\, {}^*g = \alp \bullet (\be \, {}^*g) \quad ( \in \Cal   M^{m+n,r+k}(X,Z')^+ ).$$ 
\end{enumerate}
\item[($A_{23}$)] {\bf Pushforward and pullback commute}: For $\alp \in \Cal   M^{m,r}(X,Y)^+$
 
\begin{enumerate} 
\item (proper pushforward and proper pullback commute) For proper morphisms $f:X \to X'$ and $g:Y' \to Y$ and for $\alp \in \Cal   M^{m,r}(X,Y)^+$
$$(f_*\alp)\, {}^*g = f_*(\alp\, {}^*g) \quad ( \in \Cal   M^{m,r}(X',Y')^+ ).$$
\item (smooth pushforward and smooth pullback commute) For smooth morphisms 
$f:X' \to X$ and $g:Y \to Y'$ and for $\alp \in \Cal   M^{m,r}(X,Y)^+$
$$f^*(\alp\, {}_*g) = (f^*\alp)\, {}_*g \quad ( \in \Cal   M^{m- \op{dim} f - \op{dim}g,r}(X',Y')^+ ).$$
\item (proper pushforward and smooth pullback ``commute" in the following sense) For the following fiber square
 $$\CD
\widetilde X @> {\widetilde f}>> X''\\
@V {\widetilde g} VV @VV {g} V\\
X' @>> {f} > X \endCD
$$ 
with $f$ proper and $g$ smooth, we have
$$g^*f_* = \widetilde f_* \widetilde g^*.$$

\item (smooth pushforward and proper pullback ``commute" in the following sense) For the following fiber square
 $$\CD
\widetilde Y @> {\widetilde f}>> Y''\\
@V {\widetilde g} VV @VV {g} V\\
Y' @>> {f} > Y \endCD
$$ 
with $f$ proper and $g$ smooth, for $\alp \in \Cal   M^{m,r}(X,Y'')^+$ we have
$$(\alp\, {}_*g)\, {}^*f = (\alp\, {}^*\widetilde f)\, {}_*\widetilde g.$$

\end{enumerate}
\item[($A_{123}$)] {\bf ``Projection formula"}: 
\begin{enumerate}
\item For a smooth morphism $g:Y \to Y'$ and $\alp \in \Cal   M^{m,r}(X,Y)^+$ and $\be \in \Cal   M^{n,k}(Y', Z)^+$,
$$(\alp\, {}_*g) \bullet \be = \alp \bullet g^*\be \quad ( \in \Cal   M^{m+n- \op{dim}g,r+k}(X,Z)^+ ).$$
\item For a proper morphism $g:Y' \to Y$, $\alp \in \Cal   M^{m,r}(X,Y)^+$ and $\be \in \Cal   M^{n,k}(Y', Z)^+$,
$$(\alp\, {}^*g) \bullet \be  = \alp \bullet g_*\be \quad ( \in \Cal   M^{m+n,r+k}(X,Z)^+ ).$$
\end{enumerate}
\item[(Units)] $\jeden_X = [X \xleftarrow {\op{id}_X} X \xrightarrow {\op{id}_X} X;{\bf 0}]\in \Cal   M^{0, 0}(X,X)^+$ 
satisfies that $\jeden_X \bullet \alp = \alp$ for any element $\alp \in \Cal   M^{m, r}(X, Y)^+$ and $\be \bullet \jeden_X = \be$ for any element $\be \in \Cal   M^{m, r}(Y, X)^+$. 
\end{enumerate}
\end{pro}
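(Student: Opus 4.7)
The plan is to verify each of the listed identities by reducing them to (i) the universal property of fibre products (associativity, pentagon), (ii) functoriality of vector-bundle pullback $(g\circ f)^*E \cong f^*g^*E$ together with its compatibility with Whitney sum, and (iii) the fact that \emph{proper} and \emph{smooth} morphisms are each closed under composition and base change. Since each pushforward and pullback operation is defined as either post-composition or a fibre-product operation in a single factor of a correspondence, and since the product $\bullet$ is the composition of correspondences decorated with the Whitney sum of the appropriate pulled-back bundles, every identity becomes a statement about a diagram of fibre products together with the induced identification of pulled-back bundles. Structurally this parallels the proof of Theorem \ref{UBT} in \cite{Yokura-obt}, upgraded to the correspondence setting with vector bundles attached.

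First I would dispose of the purely ``one-sided'' identities $(A_2)$, $(A_2)'$, $(A_3)$, and $(A_3)'$. Each involves a composition or a single fibre square applied to only one of the two legs $p\colon V\to X$ or $s\colon V\to Y$, leaving the other leg and the bundle $E$ untouched (or replaced by its pullback along a canonical isomorphism of fibre products). They reduce immediately to functoriality of composition and to the fact that operations on disjoint legs of a correspondence commute trivially. The units axiom is likewise immediate: composing with the identity correspondence returns the original fibre product up to canonical isomorphism, and the zero bundle is neutral for Whitney sum.

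Next I would prove associativity $(A_1)$. For three composable correspondences the triple fibre product $V\times_Y W\times_Z U$ can be formed as either $(V\times_Y W)\times_Z U$ or $V\times_Y(W\times_Z U)$, with a canonical isomorphism between them; under this isomorphism the Whitney sum of the three pulled-back bundles matches in both associations, by functoriality of pullback and its compatibility with $\oplus$. Then $(A_{12})$ and $(A_{13})$ follow from the same analysis applied at one end of the composition, exploiting the identifications $f_*(-) = {}_f\jeden \bullet (-)$, $(-){}_*g = (-)\bullet \jeden_g$, $f^*(-) = \jeden_f \bullet (-)$, and $(-){}^*g = (-)\bullet{}_g\jeden$; these identifications also let $(A_{12})$ and $(A_{13})$ specialize to parts (a) and (b) of $(A_{23})$ via associativity. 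Parts (c) and (d) of $(A_{23})$ reduce to a direct fibre-product chase: the two iterated fibre products appearing on the two sides are canonically isomorphic because $\widetilde X \cong X' \times_X X''$ is the fibre product computed in either order, and the bundle data is preserved. Finally the projection formula $(A_{123})$ is obtained by realising both sides as a single iterated product via the $\jeden$-identities above together with $(A_1)$.

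The main obstacle will be tracking the vector-bundle data through the various fibre squares. The product inserts a Whitney sum of two pulled-back bundles, and under iteration the natural isomorphisms $(g\circ f)^*E\cong f^*g^*E$ and $(f^*E)\oplus(f^*F)\cong f^*(E\oplus F)$ must be checked to be compatible with the canonical isomorphisms of iterated fibre products. Once this compatibility is established once and for all, the remaining verifications amount to routine bookkeeping along the same diagrams that appear in the proof of Theorem \ref{UBT}.
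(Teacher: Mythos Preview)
Your approach is correct. Note, however, that the paper does not actually supply a proof of this proposition: the entire Section~\ref{cob-bi} is explicitly a recall of results from \cite{Yo-NYJM}, and Proposition~\ref{proposition1} is stated without proof here (the proof lives in the earlier paper). So there is no ``paper's own proof'' to compare against in this document.

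That said, your strategy is sound and is essentially the natural one. One efficiency worth noting: once you have $(A_1)$ and the four identifications $f_*(-)={}_f\jeden\bullet(-)$, $(-){}_*g=(-)\bullet\jeden_g$, $f^*(-)=\jeden_f\bullet(-)$, $(-){}^*g=(-)\bullet{}_g\jeden$ from Definition~\ref{push-pull}, \emph{all} of $(A_{12})$, $(A_{13})$, $(A_{23})$(a)(b), and $(A_{123})$ are literally instances of associativity, so no separate diagram chase is needed for them. Parts $(A_{23})$(c)(d) are the only ones that genuinely require computing a product of two $\jeden$-classes over a fibre square (this is exactly the content of Proposition~\ref{pppu}, PPPU); you handle this correctly by the direct fibre-product argument. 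Your remark about tracking the bundle data through $(g\circ f)^*E\cong f^*g^*E$ and $f^*(E\oplus F)\cong f^*E\oplus f^*F$ identifies the only place where the vector-bundle decoration requires care, and that compatibility is standard.
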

\begin{rem}\label{special cobordism}
We note that by Definition \ref{push-pull} (\ref{pp-push}) we have
\begin{equation}\label{p_1}
\text{${}_f \jeden \bullet {}_p \jeden = 
{}_{f \circ p} \jeden$ \quad for proper maps $f$ and $p$,}
\end{equation} 
\begin{equation}\label{1_p}
\text{$\jeden_s \bullet \jeden_g = 
\jeden_{g \circ s} $ \quad for smooth maps $s$ and $g$,}
\end{equation}
\begin{equation}\label{1}
\text{${}_{\op{id}_X} \jeden = \jeden_{\op{id}_X} = \jeden_X$ is the unit. \qquad \qquad \quad }
\end{equation}
This reflects also the \emph{symmetry} between the classes of proper and smooth morphisms, coming from the \emph{transposition} of correspondences,  given by\emph{switching} the two sides of a correspondence. Namely, transposition of ${}_f \jeden \bullet {}_p \jeden =
{}_{f \circ p} \jeden$ is $\jeden_p \bullet \jeden_f = \jeden_{f \circ p}$, thus transposition of (\ref{p_1}) is (\ref{1_p}) and vice versa, putting aside properness and smoothness of maps.
We also see by (\ref{1_p}) that $\jeden_s$ for a smooth map $s$ is exactly a canonical orientation in Definition \ref{canonical}.
And by Definition \ref{push-pull} (\ref{pp-pull}), it follows from both (a) and (b) that for the following fiber square
$$
\xymatrix
{
\widetilde X \ar[r]^{\widetilde g} \ar[d]_{\widetilde s} &  X'' \ar[d]^s\\
X' \ar[r]_g & X
}
$$
with $s$ (hence $\widetilde s$ as well) being smooth and $g$ (hence $\widetilde g$ as well) being proper, we have the following
\begin{equation}\label{s-g}
\jeden_s \bullet {}_g \jeden = {}_{\widetilde g} \jeden \bullet \jeden_{\widetilde s}.
\end{equation} 
By Definition \ref{push-pull} (\ref{pp-pull}) we have the following:
\begin{equation}
\jeden_f \bullet [[E]] =[[f^*E]] \bullet \jeden_f,
\end{equation}
\begin{equation}
[[E]] \bullet {}_g \jeden = {}_g \jeden \bullet [[g^*E]],
\end{equation}
which are kind of ``projection formula".
Hence, using these properties and the associativity of the product $\bullet$, for example, we can do the following computation:
\begin{align*}
[X \xleftarrow p V \xrightarrow s Y;E] \bullet [Y \xleftarrow g W \xrightarrow t Z;F] & = ({}_p \jeden \bullet [[E]] \bullet \jeden_s) \bullet ({}_g \jeden \bullet [[F]] \bullet \jeden_t)\\
 & = {}_p \jeden \bullet [[E]] \bullet (\jeden_s \bullet {}_g \jeden ) \bullet [[F]] \bullet \jeden_t \\
& = {}_p \jeden \bullet [[E]] \bullet ( {}_{\widetilde g} \jeden \bullet \jeden_{\widetilde s}) \bullet [[F]] \bullet \jeden_t \\
& = {}_p \jeden \bullet ([[E]] \bullet  {}_{\widetilde g} \jeden) \bullet (\jeden_{\widetilde s} \bullet [[F]]) \bullet \jeden_t \\
& = {}_p \jeden \bullet ({}_{\widetilde g} \jeden \bullet [[(\widetilde g)^*E]]) \bullet ([[(\widetilde s)^*F]] \bullet \jeden_{\widetilde s} ) \bullet \jeden_t \\
& = ({}_p \jeden \bullet {}_{\widetilde g} \jeden) \bullet ([[(\widetilde g)^*E]] \bullet [[(\widetilde s)^*F]]) \bullet (\jeden_{\widetilde s}  \bullet \jeden_t) \\
& = {}_{p \circ \widetilde g} \jeden \bullet [[(\widetilde g)^*E \oplus (\widetilde s)^*F]] \bullet \jeden_{t \circ \widetilde s} 
\end{align*}
\end{rem}
The above  proposition follows directly from the definitions. We also note that all the properties except for ($A_1$) also follow from the property ($A_1$), i.e., the associativity of product $\bullet$, and the formulas listed in the above Remark \ref{rem-push-pull} and Remark \ref{special cobordism}. Indeed, for example, 
\begin{itemize}
\item ($A_2$) (a) follows from (\ref{f_*}), ($A_1$) and (\ref{p_1}).
\item ($A_2$) (b) follows from (\ref{_*}), ($A_1$) and (\ref{1_p}).
\item ($A_{23}$) (c) follows from (\ref{f_*}), (\ref{f^*}), ($A_1$) and (\ref{s-g}).
\item ($A_{23}$) (d) follows from (\ref{_*}), (\ref{*g}), ($A_1$) and (\ref{s-g}).
\end{itemize}
The following fact (which also follows from the above (\ref{s-g})) is emphasized for a later use. 
\begin{pro}(Pushforward-Product Property for Units (abbr. PPPU))\label{pppu} For the following fiber square
$$\xymatrix{
&& V\times_Y W\ar [dl]_{\widetilde{p}} \ar[dr]^{\widetilde{s}} &&\\
& V \ar [dr]_{s} && W \ar [dl]^{p} \\
 & &  Y && }
$$
with $s:V \to Y$ smooth and $p:W \to Y$ proper, we have
$$(\jeden_V \,{}_*s) \bullet p_*\jeden_W = (\widetilde p_* \jeden_{V\times_Y W})\, {}_*\widetilde s =  \widetilde p_* ( \jeden_{V\times_Y W} \, {}_*\widetilde s ) \in \Cal  M^{*,0}(V,W)^+.$$
\end{pro}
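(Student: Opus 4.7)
The plan is to verify all three expressions are literally equal to the same cobordism bicycle, namely $[V \xleftarrow{\widetilde p} V\times_Y W \xrightarrow{\widetilde s} W; {\bf 0}]$, by unwinding the definitions of the three bivariant operations on the specific zero-bundle classes $\jeden_V$, $\jeden_W$, and $\jeden_{V\times_Y W}$. There should be no real obstacle here: each side collapses to the same diagram because all the source-side identity maps absorb into the compositions.

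First I would compute the left-hand side $(\jeden_V\,{}_*s)\bullet p_*\jeden_W$. By the definition of smooth pushforward on the second factor, $\jeden_V\,{}_*s = [V \xleftarrow{\op{id}_V} V \xrightarrow{s} Y;{\bf 0}]$. By the definition of proper pushforward on the first factor, $p_*\jeden_W = [Y \xleftarrow{p} W \xrightarrow{\op{id}_W} W;{\bf 0}]$. Then the product formula in Definition \ref{prod}, applied to these two classes, forms the fiber square $V \times_Y W$ with the given maps $\widetilde p$ and $\widetilde s$, and since the original left and right legs are identities and the vector bundles are zero, the resulting class is exactly $[V \xleftarrow{\widetilde p} V\times_Y W \xrightarrow{\widetilde s} W;{\bf 0}]$.

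Next I would unwind the middle expression $(\widetilde p_*\jeden_{V\times_Y W})\,{}_*\widetilde s$. Proper pushforward along $\widetilde p$ acting on the first factor of $\jeden_{V\times_Y W}$ gives $[V \xleftarrow{\widetilde p} V\times_Y W \xrightarrow{\op{id}} V\times_Y W;{\bf 0}]$, and smooth pushforward along $\widetilde s$ on the second factor composes $\widetilde s$ with the identity to produce $[V \xleftarrow{\widetilde p} V\times_Y W \xrightarrow{\widetilde s} W;{\bf 0}]$. The third expression $\widetilde p_*(\jeden_{V\times_Y W}\,{}_*\widetilde s)$ is handled symmetrically: first push forward $\widetilde s$ on the right and then $\widetilde p$ on the left, yielding the same representative.

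The only minor point worth remarking on is that the zero bundle $\bf 0$ pulls back to the zero bundle under both $\widetilde p'$ and $\widetilde s'$, and Whitney sum with $\bf 0$ is again $\bf 0$, so no bundle bookkeeping interferes with the identification. Since all three expressions are represented by the same cobordism bicycle of vector bundles, their equality in $\Cal M^{*,0}(V,W)^+$ follows immediately.
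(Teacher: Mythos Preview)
Your proof is correct and is exactly the natural direct computation from the definitions; the paper in fact states this proposition without proof, treating it as an immediate consequence of the definitions of $\bullet$, $f_*$, and ${}_*g$ on the unit classes, which is precisely what you have unwound.
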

\begin{rem} We note that for the free ablian group $\Cal M^{*,\sharp}(X,Y)$ Definitions \ref{prod} and \ref{push-pull} are also well-defined and Propositions \ref{proposition1} and \ref{pppu} hold as well.
\end{rem}
Here we recall that
$$\Cal M^{i, r} (X \xrightarrow f Y)^+$$
is generated by the isomorphism classes of cobordism cycles of the form 
$$[V \xrightarrow p X;E]$$
such that $p:V \to X$ is \emph{proper} and the composite $f \circ p:V \to Y$ is \emph{smooth} \emph{of relative dimension $-i$} and $r=\op{rank}(E)$. We have the ``forgetting the morphism $f$":
$$\frak F: \Cal M^{i, r} (X \xrightarrow f Y)^+ \to \Cal M^{i, r} (X,Y)^+.$$

\begin{lem}\label{forget-lemma} 
The forgetting map $\frak F: \Cal M^{i, r} (X \xrightarrow f Y)^+ \to \Cal M^{i, r} (X,Y)^+$ commutes with product $\bullet$ and the pushforward and   satisfies the following simple formulas for the canonical orientation and the element $[[E]]$ for a vector bundle:
\begin{enumerate}
\item $\frak F (\alp \bullet \beta) = \frak F(\alp) \bullet \frak F(\beta).$
\item $\frak F \circ f_* = f_* \circ \frak F$. 
\item For a smooth morphism $s:X \to Y$ we have
\begin{enumerate}
\item $\frak F(\theta (s)) = \jeden_s$, i.e., 
$\frak F([X \xrightarrow {\op{id}_X} X]) =[X \xleftarrow {\op{id}_X} X \xrightarrow {s} Y]$,
where $\theta(s)=[X \xrightarrow {\op{id}_X} X]$ is the canonical orientation (see Definition \ref{canonical} and Theorem \ref{ubt} (2)).
\item $\frak F \circ s^! = s^* \circ \frak F$, where $s^!=\theta(s) \bullet$ is the Gysin homomorphism.
\end{enumerate}
\item For a vector bundle $E$ over $X$, we let $[E]:=[X \xrightarrow {\op{id}_X} X; E]$. Then for the forgetting morphism  
$\frak F: \Cal M^{i, r} (X \xrightarrow {\op{id}_X} X)^+ \to \Cal M^{i, r} (X,X)^+$ we have
$$\frak F([E]) =[[E]].$$
\end{enumerate}
\end{lem}
\begin{proof}(3) (a) and (4) are obvious. For the sake of convenience, we write down proofs for (1), (2) and (3)(b).
\begin{enumerate}
\item  For $[V \xrightarrow h X;E] \in \mathcal M^{*,\sharp}(X \xrightarrow f Y)^+$ and $[W \xrightarrow k Y;F] \in \mathcal M^{*,\sharp}(Y \xrightarrow g Z)^+$, we have
\begin{align*}
\frak F ([V \xrightarrow h X;E] \bullet [W \xrightarrow k Y;F] ) & = \frak F([V'  \xrightarrow{h \circ {k}''}  X; {{k}''}^*E \oplus (f' \circ {h}')^*F ]\\
& = [X \xleftarrow {h \circ {k}''} V' \xrightarrow{(g \circ f) \circ (h \circ {k}'')} Z; {{k}''}^*E \oplus (f' \circ {h}')^*F ]\\
& = [X \xleftarrow {h \circ {k}''} V' \xrightarrow{(g \circ k) \circ (f' \circ f') } Z; {{k}''}^*E \oplus (f' \circ {h}')^*F ]
\end{align*}
Here we use the following fiber squares
$$\CD
V' @> {h'} >> X' @> {f'} >> W \\
@V {{k}''}VV @V {{k}'}VV @V {k}VV\\
V@>> {h} > X @>> {f} > Y @>> {g} > Z .\endCD
$$
On the other hand, we have
\begin{align*}
\frak F ([V \xrightarrow h X;E]) \bullet \frak F([W \xrightarrow k Y;F] ) & = 
[X \xleftarrow h V \xrightarrow {f \circ h} Y; E] \bullet [Y \xleftarrow k W \xrightarrow {g \circ k} Z; F]\\
& = [X \xleftarrow {h \circ {k}''} V' \xrightarrow{(g \circ k) \circ (f' \circ f') } Z; {{k}''}^*E \oplus (f' \circ {h}')^*F ]
\end{align*}
Here we use the following diagram (reusing the above diagram):
$$\xymatrix{
& & V'\ar [dl]_{k''} \ar[dr]^{f' \circ h'} &  &\\
&  V \ar [dl]_{h} \ar [dr]^{f \circ h} && W \ar [dl]_{k} \ar[dr]^{g \circ k}\\
X & &  Y && Z. }
$$
Therefore we do have $\frak F([V \xrightarrow h X;E] \bullet [W \xrightarrow k Y;F] ) = \frak F ([V \xrightarrow h X;E]) \bullet \frak F([W \xrightarrow k Y;F] ).$
\item For $[V \xrightarrow h X;E] \in \mathcal M^{*,\sharp}(X \xrightarrow {g \circ f} Z)^+$, we have
\begin{align*}
\frak F (f_*[V \xrightarrow  h X;E] )) &= \frak F([V \xrightarrow {f \circ h} Y;E])\\
& = [Y \xleftarrow {f \circ h} V \xrightarrow {g \circ f \circ h} Z;E]\\
& = f_*[X \xleftarrow h V \xrightarrow {g \circ f \circ h} Z;E]\\
& = f_*(\frak F([V \xrightarrow h X;E]))
\end{align*}
\item (b) 
\begin{align*}
\frak F (s^!(\alp)) & = \frak F(\theta (s) \bullet \alp) \\
& = \frak F(\theta (s)) \bullet \frak F(\alp) \quad \text {(by (1)}\\
& = \jeden_s \bullet \frak F(\alp) \quad \text {(by (3)(a))}\\
& = s^*( \frak F(\alp)).
\end{align*}
Hence we have  $\frak F \circ s^! = s^* \circ \frak F$.
\end{enumerate}
\end{proof}
\begin{rem}
We emphasize that the forgetting morphism $\frak F $ is \emph{not} compatible with pullback. One simple reason is that ``pullback" of the bivariant theory $\Cal  M^{*,\sharp}(- \to -)^+$ and four kinds of ``pullback" of the present bi-variant theory $\Cal M^{*,\sharp}(-,-)^+$ are quite different. Thus $\frak F (g^*s)$ cannot be expressed as $g^* \frak F (s)$. This is a crucial problem and how to circumvent this problem is an issue later.
\end{rem}
\begin{rem} Lemma \ref{forget-lemma} (3) and (4) are obvious, but will play roles as a trick later.
\end{rem}
\subsection{$\mathcal C$--$\mathcal S$ - correspondences}
The cobordism bicycle of the zero bundle $[X \xleftarrow p V \xrightarrow s Y; {\bf 0}]$ is the same as the isomorphism class 
$[X \xleftarrow p V \xrightarrow s Y]$ of a correspondence with a proper morphism $p:V \to X$ and a smooth morphism $s:V \to Y$. So this shall be called \emph{a proper-smooth correspondence.} Then the above Grothendieck group $\Cal M^*(X, Y)^+ = \Cal M^{*,0}(X, Y)^+$ with the second degree $r=0$ is nothing but the Grothendieck group of proper-smooth correspondences.

In general, for the class $\mathcal C$ of \emph{confined} morphisms and the class $\mathcal S$ of \emph{specialized} morphisms, the isomorphism class 
$[X \xleftarrow p V \xrightarrow s Y]$ of a correspondence with $p \in \mathcal C$ and $s \in \mathcal S$ shall be called a $\mathcal C$--$\mathcal S$-correspondence. Then the free abelian group generated by $\mathcal C$--$\mathcal S$-correspondences is denoted by $\mathbb M^{\mathcal C}_{\mathcal S}(X,Y)$. If we assume the conditions as in Remark \ref{rem-gro}, then the Grothendieck group generated by $\mathcal C$--$\mathcal S$-correspondences is denoted by $\mathbb M^{\mathcal C}_{\mathcal S}(X,Y)^+$. Then for both $\mathbb M^{\mathcal C}_{\mathcal S}(X,Y)$ and $\mathbb M^{\mathcal C}_{\mathcal S}(X,Y)^+$ Definitions \ref{prod} and \ref{push-pull} with
the data of vector bundles deleted are also well-defined and Propositions \ref{proposition1} and \ref{pppu} with 
the data of vector bundles deleted hold as well.
\begin{rem}
As we know, correspondences make a category, usually called ``a category of correspondences", i.e., a morphism from $X$ to $Y$ is a correspondence $X \xleftarrow f V \xrightarrow g Y$ and the composition of two morphisms $X \xleftarrow f V \xrightarrow g Y$  and $Y \xleftarrow h  W \xrightarrow k  Z$ is nothing but the composition of these correspondences. In \cite{Yokura-enriched} $\mathbb M^{\mathcal C}_{\mathcal S}(X,Y)$ and $\mathbb M^{\mathcal C}_{\mathcal S}(X,Y)^+$ are both called \emph{an enriched category of correspondences}, because $\mathbb M^{\mathcal C}_{\mathcal S}(X,Y)$ and $\mathbb M^{\mathcal C}_{\mathcal S}(X,Y)^+$ are \emph{abelian groups}, whereas in the usual category of correspondences $hom (X,Y)$ is \emph{a set}. 
$\mathbb M^{\mathcal C}_{\mathcal S}(X,Y)$ and $\mathbb M^{\mathcal C}_{\mathcal S}(X,Y)^+$ can be generalized as follows: Let $\frak B(X,Y)$ be  abelian groups for pairs $(X, Y)$ which satisfy the following:
\begin{enumerate}
\item they are equipped with a product $\bullet': \frak B(X,Y) \otimes \frak B(Y,Z) \to \frak B(X,Z)$ which is associative with the unit $1=1_X \in \frak B(X,X)$,
\item  For confined maps $f:Y \to X$ there exists distinguished elements ${}_f 1 \in \mathcal B(X,Y)$ such that for two confined maps $f:Y \to X$ and $g: Y \to Z$ we have ${}_f 1 \bullet' {}_p 1= {}_{f \circ p} 1$.
\item  For specialized maps $s:X \to Y$ there exist distinguished elements $1_s \in \frak B(X,Y)$ such that for two specialized maps $s:X \to Y$ and $g: Y \to Z$ we have $1_s \bullet' 1_g = 1_{g \circ s}$.
\item ${}_{id_X}1 =1 = 1_{id_X}$ is the unit for the identity map $id_X:X \to X$,
\item $1_s \bullet' {}_g 1 = {}_{\widetilde g} 1 \bullet' 1_{\widetilde s}$ for the fiber square
$$
\xymatrix
{
\widetilde X \ar[r]^{\widetilde g} \ar[d]_{\widetilde s} &  X'' \ar[d]^s\\
X' \ar[r]_g & X
}
$$
with $s$ (hence $\widetilde s$ as well) being specialized  and $g$ (hence $\widetilde g$ as well) being confined.
\end{enumerate}
Then as in Remark \ref{rem-push-pull}, we can define the following two pushforwards and two pullbacks:
$$\text{$g_*(-) := {}_g 1 \bullet' (-),  {}^*g(-) := (-) \bullet' {}_g 1 $ for confined maps $g$,}$$
$$\text{$s^*(-) := 1_s \bullet' (-),  {}_*s(-) := (-) \bullet' 1_s $ for specialized maps $s$.}$$
Then we can see that 
$\frak B(X,Y)$ gives rise to a theory satisfying those properties in the above Proposition \ref{proposition1}. 
Furthermore, we suppose that 
\begin{enumerate}
\item there is a well-defined element $[[E]]'$ for each vector bundle over a space and 
\item an abelian group $\mathcal B(X,Y)$ is generated by elements of the forms ${}_p 1 \bullet' [[E]]' \bullet' 1_s$ for confined maps $g$ and specialized maps $s$ and 
\item they  satisfy the following properties
\begin{equation}
1_f \bullet' [[E]]' =[[f^*E]] \bullet' 1_f.
\end{equation}
\begin{equation}
[[E]]' \bullet' {}_g 1= {}_g 1 \bullet' [[g^*E]]'.
\end{equation}
\end{enumerate}
Then ${}_p 1 \bullet' 1_s$ and  ${}_p 1 \bullet' [[E]]' \bullet' 1_s$ shall be respectively called \emph{an ``abstract" correspondence} and \emph{an ``abstract" cobordism bicycle of vector bundle}. Then $\frak B(X,Y)$ is respecitvely \emph{a more enriched category of abstract correspondences} and \emph{a more enriched category of abstract cobordism bicycles of vector bundles}, compared with the geometric cobordism bicycle of vector bundle $[X \xleftarrow p V \xrightarrow s Y; E]= {}_p \jeden \bullet [[E]] \bullet \jeden_s$. For such a theory we do have a canonical Grothendieck transformation, e.g., 
$$ \ga: \mathbb M^{\mathcal C}_{\mathcal S}(X,Y) \to \frak B(X,Y)$$
defined by
$$\ga({}_p \jeden \bullet [[E]] \bullet \jeden_s):= {}_p 1 \bullet' [[E]]' \bullet' 1_s.$$
Which, in particular, implies that $\ga({}_p \jeden)= {}_p 1$ by considering the special case when $ [[E]] =[[{\bf 0}]]=\jeden_V$ and $\jeden_s=\jeden_V$ for the identity map $s=\op{id}_V$, similarly $\ga([[E]])= [[E]]'$ and $\ga(\jeden_s)=1_s$ by considering the corresponding special cases. Here we emphasize that a theory satisfying those properties in the above Proposition \ref{proposition1} \emph{does not necessarily} come from a certain more enriched category $\frak B(X,Y)$ of abstract correspondences or abstract cobordism bicycles of vector bundles.
\end{rem}

We also have the following ``forgetting the map $f$" defined by $\frak F([V \xrightarrow p X]):=[X \xleftarrow p V \xrightarrow {f \circ p} Y]$:
$$\frak F: \mathbb M^{\mathcal C}_{\mathcal S}(X \xrightarrow f  Y) \to \mathbb M^{\mathcal C}_{\mathcal S}(X,Y),$$
$$\frak F: \mathbb M^{\mathcal C}_{\mathcal S}(X \xrightarrow f  Y)^+ \to \mathbb M^{\mathcal C}_{\mathcal S}(X,Y)^+,$$
which are both embeddings or monomorphisms. 

\section{A bi-variant algebraic cobordism with bundles $\Omega^{*.\sharp}(X,Y)$}\label{main}

In this section we consider the ``correspondence" version of Annala's bivariant derived algebraic cobordism $\Omega^*(X \to Y)$ \cite{An} and furthermore that of Annala--Yokura's bivariant algebraic cobordism with vector bundles $\Omega^{*,\sharp}(X \to Y)$ \cite{AY} (also see \cite{An2, An3}). 

From now on, as in \cite{An} and \cite{AY}, we work in derived algebraic geometry, thus we work in the context of quasi-projective derived schemes over a base field of characteristic zero. These are not categories in the classical sense, but $\infty$-categories, so that one needs some small modifications. One can work either directly with an $\infty$-category with homotopy fiber squares (instead of fiber squares), and equivalence classes of arrows (instead of isomorphism classes) and a final object unique up to equivalence, e.g., as in Annala's thesis \cite{An-Th}. Or one works in the underlying homotopy category, which is a usual category so that equivalence classes map to isomorphism classes and final objects map to a usual final object unique up to isomorphism. And instead of fiber products one works with the commutative diagrams induced from the homotopy fiber squares, e.g., as in \cite{An}. So one has to be aware that these are usually not fiber squares in the underlying homotopy category as used before. For simplicity we work in the underlying homotopy category. Then the notion of a ``homotopy (or derived) fiber square” in the homotopy category is preserved by isomorphisms of commutative squares, so that all the constructions for universal bivariant theories, or those defined via 
correspondences, work as before, e.g., any two arrows 
$$
\xymatrix{
{} & Y \ar[d]\\
X \ar[r] & Z
}
$$ 
can be completed to a ``homotopy (or derived) fiber square” unique up to isomorphism. Also in this context one can speak of vector bundles $E$ (of constant rank), proper as the confined morphisms and quasi-smooth (of constant (virtual) fiber dimension) as the specialized morphisms (as discussed in more detail in \cite{AY}).
\subsection{A slight modification of $\Cal M^{*,\sharp}(X, Y)^{+}$}

In order to construct our bi-variant algebraic cobordism $\Omega^{*,\sharp}(X,Y)$, we modify our previous abelian groups $\Cal M^{*,\sharp}(X, Y)^{+}$ as follows.

\begin{defn} Let $X \xleftarrow p V \xrightarrow s Y$ be a correspondence such that $p:V \to X$ is \emph{proper} and $s: V \to Y$ is \emph{quasi-smooth}, and let $E$ be a complex vector bundle over $V$. Then
$(X \xleftarrow p V \xrightarrow s Y; E)$
is still called a \emph{cobordism bicycle of a vector bundle}.
\end{defn}

Then we define $\Cal M^{i,r}(X, Y)^{+}$ (using the same symbol as before) to be an abelian group 
generated by the isomorphism classes of cobordism bicycles of the form $[X \xleftarrow p V \xrightarrow s Y;E]$
such that $s:V \to Y$ is a quasi-smooth morphism \emph{of virtual relative dimension $-i$} and $r=\op{rank}(E)$. 

Here we note that in the case of the bivariant algebraic cobordism with vector bundles $\Omega^{*,\sharp}(X \to Y)$ we consider
$$\mathcal M^{i,r}(X \to Y )^{+}:=\mathbb L \otimes \Cal M^{i,r}(X \to Y)^{+}$$
where $\Cal M^{i,r}(X \xrightarrow f Y)^{+}$ is generated by the isomorphism classes of cobordism cycles of the form 
$$[V \xrightarrow p X;E]$$
such that $p:V \to X$ is proper and the composite $f \circ p:V \to Y$ is quasi-smooth \emph{of virtual relative dimension $-i$} and $r=\op{rank}(E)$. 
\begin{rem} If we consider the case when $E$ is the zero bundle ${\bf 0}$, i.e., $[V \xrightarrow p X; {\bf 0} ]$, or equivalently we consider $[V \xrightarrow p X]$, then we have $\Cal M^{i,0}(X \xrightarrow f Y)^{+}$, which is nothing but $\Cal M^i(X \xrightarrow f Y)^{+}$ used in \cite{An}. If we use the notation in Remark \ref{rem-gro}, 
$\Cal M^i(X \xrightarrow f Y)^{+}$ is denoted by $(\mathbb M^{prop}_{qusm})^i(X \xrightarrow f Y)^+$ where $prop=\mathcal C$ is the class of proper morphisms and $qusm=\mathcal S$ is the class of quasi-smooth morphisms.
\end{rem}
Therefore we have the canonical morphism, ``forgetting" the morphism $f:X \to Y$:
$$\frak  F: \Cal M^{i,r}(X \xrightarrow f Y )^{+} \to \Cal M^{i,r}(X,Y )^{+}$$
defined by
$$\frak F([V \xrightarrow p X;E]):= [X \xleftarrow p V \xrightarrow {f \circ p} Y;E].$$

Then we get the same results for 
$\Cal M^{i,r}(-,-)^{+}$
 as Proposition \ref{proposition1} and Proposition  \ref{pppu} as in \S 3, so omitted for the sake of simplicity. 

We recall the following definition of the top Chern class of a vector bundle \cite[Definition 5.5]{AY}:
\begin{defn}(\cite[Definition 5.5]{AY}) \label{TopChernClass}
Given a vector bundle $E$ of rank $n$ on $X$, we define its \emph{top Chern class} as
\begin{equation*}
c_n(E) = {s}^*{s}_!(1_X) \in \Cal {M}^{n,0}(X \xrightarrow {\op{id}_X} X)^+ 
\end{equation*}
where $s: X \to E$ is the zero section. 
\end{defn}
We note (see \cite[Remark 5.6]{AY}) that the above $c_n(E)$ can be expressed explicitly as follows:
\begin{equation}\label{chern-explicit}
c_n(E) = [V(s) \xrightarrow {i_{V(s)}} X; {\bf 0}]
\end{equation}
where $V(s)$ is the derived vanishing locus of the zero section and $i_{V(s)}$ is the inclusion. 
Hence, it is quite natural to define the ``correspondence" version of the Chern class, still called the top Chern class of the vector bundle $E$ and denoted by $c_n(E) \in 
\Cal {M}^{n,0}(X, X)^+$
, as follows:
\begin{defn}
$$c_n(E) := [X \xleftarrow {i_{V(s)}} V(s) \xrightarrow {i_{V(s)}} X;{\bf 0}] \in 
Cal M^{n,0}(X, X)^+.$$
\end{defn}
\begin{lem}\label{fcfE} Let $E$ be a vector bundle of rank $n$ over $X$.
\begin{enumerate}
\item \label{fcfl} For a proper morphism $f:X' \to X$, $f_*c_n(f^*E)=c_n(E){}^*f$, i.e.,  ${}_f \jeden\bullet c_n(f^*E)=c_n(E)\bullet {}_f \jeden.$.
\item \label{cflf} For a quasi-smooth morphism $f:X' \to X$, $c_n(f^*E){}_*f = f^*c_n(E)$, i.e.,  $c_n(f^*E) \bullet {}_f \jeden ={}_f \jeden \bullet c_n(E).$

\end{enumerate}
\end{lem}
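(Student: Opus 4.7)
The plan is to reduce both identities to a single derived base change statement for the vanishing locus of the zero section of a vector bundle, and then to match the resulting correspondences leg-by-leg. Write $s:X\to E$ and $s':X'\to f^*E$ for the zero sections, and $i:V(s)\to X$, $i':V(s')\to X'$ for the inclusions of the (derived) vanishing loci. By the explicit description (\ref{chern-explicit}) and its correspondence analogue defining $c_n$,
$$c_n(E)=[X\xleftarrow{i}V(s)\xrightarrow{i}X;{\bf 0}],\qquad c_n(f^*E)=[X'\xleftarrow{i'}V(s')\xrightarrow{i'}X';{\bf 0}].$$
The essential input is that for any morphism $f:X'\to X$ the natural square
$$\xymatrix{V(s')\ar[r]^{\widetilde f}\ar[d]_{i'} & V(s)\ar[d]^{i}\\ X'\ar[r]_{f} & X}$$
is a homotopy fiber square, with $\widetilde f$ the induced map. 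This holds because $V(s)$ can be written as the derived self-intersection $X\times^h_E X$ of the zero section with itself, a construction which commutes with pullback along $f:X'\to X$.

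For part (\ref{fcfl}), unfolding the proper pushforward on the first factor via Definition \ref{push-pull}(1)(a) gives
$$f_*c_n(f^*E)=[X\xleftarrow{f\circ i'}V(s')\xrightarrow{i'}X';{\bf 0}],$$
while computing $c_n(E){}^*f$ via the proper pullback on the second factor in Definition \ref{push-pull}(2)(b) requires forming the fiber square of $i:V(s)\to X$ along $f:X'\to X$. The identification above lets us replace $V(s)\times_X X'$ by $V(s')$, yielding
$$c_n(E){}^*f=[X\xleftarrow{i\circ\widetilde f}V(s')\xrightarrow{i'}X';{\bf 0}].$$
The two expressions agree because commutativity of the fiber square forces $f\circ i'=i\circ\widetilde f$.

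Part (\ref{cflf}) is entirely parallel. Applying the quasi-smooth pushforward on the second factor via Definition \ref{push-pull}(1)(b) gives $c_n(f^*E){}_*f=[X'\xleftarrow{i'}V(s')\xrightarrow{f\circ i'}X;{\bf 0}]$, while the quasi-smooth pullback on the first factor from Definition \ref{push-pull}(2)(a) yields, after the same base change identification, $f^*c_n(E)=[X'\xleftarrow{i'}V(s')\xrightarrow{i\circ\widetilde f}X;{\bf 0}]$; equality again follows from $i\circ\widetilde f=f\circ i'$. The main point to verify carefully is the derived base change equivalence $V(s)\times^h_X X'\simeq V(s')$ together with the compatibility of this equivalence with $i$, $i'$, and $\widetilde f$; this is where the derived setting of \S \ref{main} is essential, since in the classical setting the set-theoretic vanishing locus is not preserved under base change in a way compatible with the codimension grading.
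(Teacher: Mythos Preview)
Your proof is correct and follows essentially the same approach as the paper's: both arguments reduce to the derived base change identification $V(s)\times^h_X X'\simeq V(s')$ and then compare the two legs of the resulting correspondence via the commutativity $i\circ\widetilde f=f\circ i'$. The only differences are cosmetic: the paper starts from $c_n(E){}^*f$ and rewrites it as $f_*$ applied to something, then identifies that something with $c_n(f^*E)$, whereas you compute each side separately and match; you also spell out the base change of vanishing loci a bit more carefully than the paper does.
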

\begin{proof} We prove only the first one, since that of the second one is similar. By definition we have
$c_n(E) = [X \xleftarrow {i_{V(s)}} V(s) \xrightarrow {i_{V(s)}} X]$, where we drop the zero bundle $\bf 0$ for the sake of simplicity. Hence, using the following diagram in which the right diamond is a fiber square:
$$\xymatrix{
& V(s)   \ar [dl]_{i_{V(s)}} \ar [dr]_{i_{V(s)}} & \widehat {V(s)} \ar[l]_{\widehat f} \ar[dr]^{\widehat {i_{V(s)}}  }\\
X &  & X & X' \ar[l]^f}
$$
we have
\begin{align*}
c_n(E){}^*f & = [X \, \, \xleftarrow {i_{V(s)} \circ \widehat f} \, \,  \widehat {V(s)}  \, \, \xrightarrow {\widehat {i_{V(s)}}} \, \, X']\\
& = [X \, \, \xleftarrow {f \circ \widehat {i_{V(s)}}} \, \,  \widehat {V(s)}  \, \, \xrightarrow {\widehat {i_{V(s)}}} \, \, X']\\
& = f_*[X' \, \, \xleftarrow {\widehat {i_{V(s)}}} \, \,  \widehat {V(s)}  \, \, \xrightarrow {\widehat {i_{V(s)}}} \, \, X']
\end{align*}
Since $\widehat {V(s)}$ is the pullback of the vanishing locus $V(s)$ of the zero section $s$ by the morphism $f:X \to X'$, $\widehat {V(s)}$ is equal to the vanishing locus of the pullbacked zero section $f^*s$ of the pullbacked vector bundle $f^*E$, i.e., we have
$c_n(f^*E)=[X' \, \, \xleftarrow {\widehat {i_{V(s)}}} \, \,  \widehat {V(s)}  \, \, \xrightarrow {\widehat {i_{V(s)}}} \, \, X']$. Hence we have $f_*c_n(f^*E)=c_n(E){}^*f$.
\end{proof}
For a vector bundle  $L$ of rank $\ell$ over $X$ or a vector bundle $M$ of rank {m} over $Y$, the homomorphisms
$$c_{\ell}(L) \bullet: \Cal M^{i,r}(X,Y)^+ \to \Cal  M^{i+\ell,r}(X,Y)^+, \quad \bullet \, c_m(M): \Cal M^{i,r}(X,Y)^+ \to \Cal M^{i+m,r}(X,Y)^+$$
defined respectively by $c_{\ell}(L) \bullet \alp$ and $\beta \bullet c_m(M)$,  are called the ``Chern class operators". 
\begin{pro}\label{ch-op} The Chern class operators of line bundles satisfy the following properties.
\begin{enumerate}
\item (identity): If $L$ and $L'$ are line bundles over $X$ and isomorphic and if $M$ and $M'$ are line bundles over $Y$ and isomorphic, then 
$$c_1(L) \bullet  = c_1(L') \bullet:\Cal  M^{i,r}(X,Y)^+ \to \Cal M^{i+1,r}(X,Y)^+,$$
$$ \bullet \, c_1(M) =\bullet \, c_1(M') : \Cal  M^{i,r}(X,Y)^+ \to \Cal M^{i+1,r}(X,Y)^+.$$
\item (commutativity): If $L$ and $L'$ are line bundles over $X$ and if $M$ and $M'$ are line bundles over $Y$, then 
$$c_1(L) \bullet c_1(L') \bullet  = c_1(L') \bullet c_1(L) \bullet  :\Cal  M^{i,r}(X,Y)^+ \to \Cal  M^{i+2,r}(X,Y)^+,$$
$$\bullet c_1(M)\bullet c_1(M')= \bullet c_1(M')\bullet c_1(M) : \Cal  M^{i,r}(X,Y)^+ \to \Cal  M^{i+2,r}(X,Y)^+.$$
\item (compatibility with product) Let $L$ be a line bundle over $X$ and $N$ be a line bundle over $Z$. For $\alp \in \Cal  M^{i,r}(X,Y)^+$ and $\be \in \Cal  M^{j,k}(Y,Z)^+$, then 
 $$ c_1(L) \bullet (\alp \bullet \be) = \Bigl (c_1(L) \bullet \alp \Bigr ) \bullet \be, \quad (\alp \bullet \be) \bullet c_1(N) = \alp \bullet \Bigl (\be \bullet c_1(N) \Bigr ).$$
\item (compatibility with pushforward = ``projection formula") 
For a proper morphism $f:X \to X'$ and a line bundle $L$ over $X'$ and for a quasi-smooth morphism $g:Y \to Y'$ and a line bundle $M$ over $Y'$ we have that for $\alp \in \Cal  M^{i,r}(X,Y)^+$
$$ f_*\bigl (c_1(f^*L) \bullet \alp \bigr) = c_1(L) \bullet f_*\alp, \,  \bigl  (\alp \bullet c_1(g^*M) \bigr) \, {}_*g = \alp \, {}_*g \bullet  c_1(M).$$
\item (compatibility with pullback =``pullback formula") 
For a quasi-smooth morphism $f:X' \to X$ and a line bundle $L$ over $X$ and for a proper morphism $g:Y' \to Y$ and a line bundle $M$ over $Y$ we have that for $\alp \in \Cal  M^{i,r}(X,Y)^+$
$$ f^*\bigl (c_1(L) \bullet \alp \bigr) = c_1(f^*L) \bullet f^*\alp, \, \bigl (\alp \bullet c_1(M) \bigr) \, {}^*g= \alp \, {}^*g \bullet c_1(g^*M).$$
\item (Pullback Property for Unit (abbr. PPU))\label{ppu}  For a quasi-smooth morphism $f:X' \to X$ and a line bundle $L$ over $X$ and for a proper morphism $g:Y' \to Y$ and a line bundle $M$ over $Y$ we have that 
for $\jeden_X \in \Cal  M^{0,0}(X,X)^+$ and $\jeden_Y \in \Cal  M^{0,0}(Y,Y)^+$
$$c_1(f^*L) \bullet f^*\jeden_X = f^*\jeden_X \bullet c_1(L) \in \Cal  M^{\op{dim}f+1, 0}(X',X)^+,$$
$$\jeden_Y \, {}^*g \bullet c_1(g^*M) = c_1(M) \bullet \jeden_Y \, {}^*g \in \Cal  M^{1,0}(Y,Y')^+. \qquad $$

In particular
\item (Commutativity of the unit and Chern class) \label{u-c}
Let $L$ be a line bundle over $X$. Then we have
$$c_1(L) \bullet \jeden_X = \jeden_X \bullet c_1(L).$$ 
\end{enumerate}
\end{pro}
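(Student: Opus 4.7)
The plan is to work through items (1)--(7) in order, using Lemma \ref{fcfE}, the bivariant axioms from Proposition \ref{proposition1}, and the standing rewrites $f^{*}(-)=\jeden_{f}\bullet(-)$, $f_{*}(-)={}_{f}\jeden\bullet(-)$, $(-)\,{}_{*}g=(-)\bullet\jeden_{g}$, $(-)\,{}^{*}g=(-)\bullet{}_{g}\jeden$ which reduce every pushforward or pullback to left/right multiplication by an explicit unit class.

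Items (1)--(3) should be essentially formal. For (1), an isomorphism $L\cong L'$ of line bundles over $X$ carries the zero section of $L$ to that of $L'$, inducing an isomorphism of vanishing loci over $X$, so the representing correspondences of $c_{1}(L)$ and $c_{1}(L')$ agree as classes in $\mathcal{M}^{1,0}(X,X)^{+}$; the $Y$-side is identical. For (2), both iterated products are represented by the symmetric fibre product $V(s_{L})\times_{X} V(s_{L'})$ (respectively on $Y$), so the two orders coincide. For (3), each identity is a direct instance of the associativity axiom $(A_{1})$ of $\bullet$ recorded in Proposition \ref{proposition1}.

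For (4) and (5) the strategy is uniform: rewrite every pushforward/pullback as multiplication by the appropriate unit, slide Chern classes past units by associativity, and invoke Lemma \ref{fcfE} to swap $c_{1}(f^{*}L)$ with $c_{1}(L)$ (and likewise for $M$). For example, the first projection formula of (4) will be established by
\begin{align*}
f_{*}\bigl(c_{1}(f^{*}L)\bullet\alp\bigr)
&=\bigl(f_{*}c_{1}(f^{*}L)\bigr)\bullet\alp = \bigl(c_{1}(L)\,{}^{*}f\bigr)\bullet\alp\\
&=c_{1}(L)\bullet\bigl({}_{f}\jeden\bullet\alp\bigr)=c_{1}(L)\bullet f_{*}\alp,
\end{align*}
using $(A_{12})$, Lemma \ref{fcfE}(\ref{fcfl}), associativity, and the rewrite of $f_{*}$, in that order. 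The remaining identity in (4) and the two in (5) follow by the same template, invoking Lemma \ref{fcfE}(\ref{cflf}) in place of (\ref{fcfl}) whenever the acting morphism is quasi-smooth on the second factor rather than proper on the first.

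For (6), the key observation is that $f^{*}\jeden_{X}=\jeden_{f}\bullet\jeden_{X}=\jeden_{f}$ and $\jeden_{Y}\,{}^{*}g=\jeden_{Y}\bullet{}_{g}\jeden={}_{g}\jeden$, so that the first PPU identity collapses to $c_{1}(f^{*}L)\,{}_{*}f=f^{*}c_{1}(L)$, which is precisely Lemma \ref{fcfE}(\ref{cflf}), and the second to $c_{1}(M)\,{}^{*}g=g_{*}c_{1}(g^{*}M)$, which is Lemma \ref{fcfE}(\ref{fcfl}). Finally, (7) is the specialization $f=\op{id}_{X}$ of the first PPU identity: the identity morphism is both proper and quasi-smooth, $\op{id}_{X}^{*}L=L$ and $\op{id}_{X}^{*}\jeden_{X}=\jeden_{X}$, so (6) immediately returns $c_{1}(L)\bullet\jeden_{X}=\jeden_{X}\bullet c_{1}(L)$. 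The whole argument should be formal once Lemma \ref{fcfE} is in hand; the main obstacle I anticipate is the bookkeeping of the four unit classes $\jeden_{f},{}_{f}\jeden,\jeden_{g},{}_{g}\jeden$ and of which factor each operator acts on, so that every identity genuinely collapses to associativity of $\bullet$.
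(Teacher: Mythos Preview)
Your proposal is correct and follows essentially the same route as the paper. The paper also treats (1), (3), and (7) as formal, and for (2), (4), (5), (6) uses exactly the combination of $(A_{12})$/$(A_{13})$, Lemma~\ref{fcfE}, and $(A_{123})$ that you describe; your choice to phrase $(A_{123})$ via the explicit rewrites $f_{*}(-)={}_{f}\jeden\bullet(-)$, $(-)\,{}^{*}f=(-)\bullet{}_{f}\jeden$, etc., and then invoke associativity of $\bullet$ is just an unpacking of those axioms, and your reduction of (6) to $c_{1}(f^{*}L)\,{}_{*}f=f^{*}c_{1}(L)$ via $f^{*}\jeden_{X}=\jeden_{f}$ is the same computation the paper carries out line by line.
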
 
\begin{proof} It suffices to show (2), since (1), (3) and (7) are clear and (4), (5) and (6) follow from the above Lemma \ref{fcfE} and the associativity of product $\bullet$.

We just show the first one.  
 Let $s:X \to L$ and $s':X \to L'$ be the zero sections.
 It follows from the fact that the derived fiber product $V(s) \times _X V(s') = V((s,s'))$ where
 $(s,s'):X \to L \oplus L'$ is the zero section. Hence by the definition of the product $\bullet$ we have
 \begin{align*}
 c_1(L) \bullet c_1(L') & = [X \xleftarrow {i_{V(s)}} V(s) \xrightarrow {i_{V(s)}} X;{\bf 0}] \bullet [X \xleftarrow {i_{V(s')}} V(s') \xrightarrow {i_{V(s')}} X;{\bf 0}] \\
 & = [X \xleftarrow {i_{V((s,s'))}} V((s,s')) \xrightarrow {i_{V((s,s'))}} X;{\bf 0}] \\
 &= c_2(L \oplus L') = c_2(L' \oplus L) =  c_1(L') \bullet c_1(L).
 \end{align*}
\end{proof}
Here we recall the definition of \emph{a bi-variant theory} \cite{Yo-NYJM}, which is defined as one similar to the definition of Fulton--MacPherson's bivariant theory. In this sense, it could be called ``a bi-variant theory of Fulton--MacPherson-type ". 
\begin{defn}[Bi-variant theory]\label{bi-va-defn} An association $\Cal   B$ assigning to a pair $(X,Y)$ a graded abelian group $\Cal   B^*(X,Y)$ is called a \emph{bi-variant theory}
provided that

\noindent
(1) it is equipped with the following three operations

\begin{enumerate}
\item (Product)  \quad $\bullet: \Cal   B^i(X, Y) \times \Cal   B^j(Y,Z) \to \Cal B^{i+j}(X, Z)$

\item (Pushforward) 
\begin{enumerate}
\item For a \emph{proper} morphism $f:X \to X'$, $f_*:\Cal   B^i(X,Y) \to \Cal   B^i(X',Y)$.
\item For a \emph{smooth} morphism $g:Y \to Y'$, ${}_*g: \Cal   B^i(X,Y) \to \Cal   B^{i-\op{dim}g}(X,Y')$.
\end{enumerate}

\item (Pullback) 
\begin{enumerate}
\item For a \emph{smooth} morphism $f:X' \to X$, $f^*: \Cal   B^i(X,Y) \to  \Cal   B^{i-\op{dim}f}(X',Y)$.
\item For a \emph{proper} morphism $g:Y' \to Y$, ${}^*g: \Cal   B^i(X,Y) \to  \Cal   B^i(X,Y')$.
\end{enumerate}
\end{enumerate}

\noindent
(2) the three operations satisfy the following nine properties as in Proposition \ref{proposition1}:
\begin{enumerate}
\item[($A_1$)] Product is associative.
\item[($A_2$)] Pushforward is functorial. ((a), (b))
\item[($A_2$)'] Proper pushforward and smooth pushforward commute.
\item[($A_3$)] Pullback is functorial.  ((a), (b))
\item[($A_3$)'] Proper pullback and smooth pullback commute.
\item[($A_{12}$)] Product and pushforward commute.  ((a), (b))
\item[($A_{13}$)] Product and pullback commute.  ((a), (b))
\item[($A_{23}$)] Pushforward and pullback commute.  ((a), (b), (c), (d))
\item[($A_{123}$)] Projection formula.  ((a), (b))
\end{enumerate}

\noindent
(3) $\Cal  B$ has units, i.e., there is an element $1_X \in \Cal  B^0(X,X)$ such that $1_X \bullet \alp = \alp$ for any element $\alp \in \Cal   B(X, Y)$ and $\be \bullet 1_X = \be$ for any element $\be \in \Cal   B(Y, X)$.

\noindent
(4) $\Cal  B$ satisfies PPPU (as in Proposition \ref{pppu}).

\noindent
(5) $\Cal B$ is equipped with the Chern class operators satisfying the properties in Proposition \ref{ch-op}.
\end{defn}

\begin{cor} $\Cal  M^{*,\sharp}(X, Y)^+$ is a bi-variant theory (with respect the grading $*$, or ignoring the grading $\sharp$.).
\end{cor}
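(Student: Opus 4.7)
The plan is to verify each of the five conditions in Definition~\ref{bi-va-defn} in turn, noting that almost all of the content has already been established in the present section and in \S\ref{cob-bi}, adapted from the smooth setting to the quasi-smooth setting.

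For condition (1), the three operations of product, proper/smooth-type pushforward, and smooth-type/proper pullback are defined on $\mathcal M^{*,\sharp}(X,Y)^+$ by exactly the same formulas as in Definitions~\ref{prod} and~\ref{push-pull}, with the role of smooth morphisms now played by quasi-smooth morphisms. The point to check is that these formulas still land in $\mathcal M^{*,\sharp}$: since quasi-smooth morphisms are closed under composition and under (derived) base change, the fiber diamonds that appear in the definition of the product, the smooth-type pushforward, and the smooth-type pullback all yield correspondences of the required type, and the virtual relative dimensions behave additively as indicated by the gradings. The action of $\mathbb L$ passes through all three operations because they are $\mathbb Z$-linear on representatives. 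Condition (3) — existence of units — is then immediate, with $\jeden_X \in \mathcal M^{0,0}(X,X)^+$ the obvious class, and the unit identities follow from the trivial fiber square.

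For condition (2), the author has already announced that the nine identities of Proposition~\ref{proposition1} carry over to $\mathcal M^{*,\sharp}(X,Y)^+$ verbatim after the substitution ``smooth $\leadsto$ quasi-smooth''; the proofs are purely categorical, using associativity of derived fiber products, the base-change stability of proper and of quasi-smooth morphisms, and functoriality of vector-bundle pullback. Thus we may simply invoke the derived analogue of Proposition~\ref{proposition1}, with no genuinely new argument required. Similarly, condition (4) — the Pushforward-Product Property for Units — is the direct derived analogue of Proposition~\ref{pppu}, proved by the same diamond chase.

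Finally, condition (5) is exactly Proposition~\ref{ch-op}, which has just been proved in this section using Lemma~\ref{fcfE} together with the axioms $(A_{12})$, $(A_{13})$, and $(A_{123})$; in particular the identity, commutativity, product-compatibility, projection, and pullback formulas for Chern class operators, as well as PPU and the commutativity of units with Chern classes, are already in hand. Assembling these observations, all five clauses of Definition~\ref{bi-va-defn} are satisfied. The ``main obstacle'' here is really only notational — there is no hidden difficulty, since the substance has been carried out earlier; the corollary is essentially a packaging statement that confirms $\mathcal M^{*,\sharp}(X,Y)^+$ satisfies the formal axioms of a bi-variant theory in the sense of \cite{Yo-NYJM}, either with the full bigrading $(*,\sharp)$ preserved or with the vector-bundle rank $\sharp$ ignored.
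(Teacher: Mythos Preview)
Your proposal is correct and matches the paper's approach: the paper gives no explicit proof of this corollary, treating it as immediate from the preceding results (the derived analogues of Propositions~\ref{proposition1} and~\ref{pppu}, together with Proposition~\ref{ch-op}), and your write-up simply makes that packaging explicit.
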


\subsection{Grothendieck transformation and a bi-variant ideal}
\begin{defn}
Let $\Cal   B, \Cal   B'$ be two bi-variant theories on a category $\Cal V$. A {\it Grothendieck transformation} from $\Cal   B$ to $\Cal   B'$, $\ga : \Cal   B \to \Cal   B'$
is a collection of homomorphisms
$\Cal   B(X, Y) \to \Cal   B'(X,Y)$
for a pair $(X,Y)$ in the category $\Cal V$, which preserves the above three basic operations and the Chern class operator: 
\begin{enumerate}
\item $\ga (\alp \bullet_{\Cal   B} \be) = \ga (\alp) \bullet _{\Cal   B'} \ga (\be)$, 
\item $\ga(f_{*}\alp) = f_*\ga (\alp)$ and $\ga(\alp \, {}_*g) = \ga (\alp) \, {}_*g$,
\item $\ga (g^* \alp) = g^* \ga (\alp)$ and $\ga (\alp \, {}^*f) = \ga (\alp) \, {}^*f$,
\item $\ga(c_1(L) \bullet \alp) = c_1(L) \bullet_{\Cal   B} \ga(\alp)$ and $\ga(\alp \bullet c_1(M) ) = \ga(\alp) \bullet_{\Cal   B} c_1(M)$.
\end{enumerate}
\end{defn}
Motivated by the definition of the bivariant ideal considered in \cite{An} and \cite{AY}, in an analogous manner we can define  a ``bi-variant" ideal as follows:
Let $\Cal B$ be a bi-variant theory. A \emph{bi-variant ideal} $\Cal I  \subset \Cal B$ is defined to consist of (graded) subgroups $\Cal I(X, Y) \subset \Cal B(X, Y)$ for each pair $(X,Y)$ such that the following hold: Let $\alp \in \Cal I(X,Y)$.
\begin{enumerate}
\item (pushforward) 
  \begin{enumerate}
   \item $f_*\alp \in \Cal I(X',Y)$ for a \emph{proper} morphism $f:X \to X'$, i.e., $f_*: \Cal I(X,Y) \to \Cal I(X',Y),$
   \item $\alp {}_*g \in \Cal I(X,Y')$ for a \emph{quasi-smooth} morphism $g:Y \to Y'$, i.e., ${}_*g: \Cal I(X,Y) \to \Cal I(X,Y'),$
    \end{enumerate}

\item (pullback)
    \begin{enumerate}
    \item $f^*\alp \in \Cal I (X',Y)$ for a \emph{quasi-smooth} morphism $f:X' \to X$, i.e.,$f^*: \Cal I(X,Y) \to \Cal I(X',Y),$
    \item $\alp {}^*g \in \Cal I(X,Y')$ for a \emph{proper} morphism $g:Y \to Y'$, i.e., ${}^*g: \Cal I(X,Y) \to \Cal I(X,Y'),$
     \end{enumerate}

\item  (product)
  \begin{enumerate}
   \item $\beta \bullet \alp \in \Cal I(X',Y)$ for any $\beta \in \Cal B(X', X)$, i.e., $\bullet: \Cal B(X', X) \otimes \Cal I(X,Y) \to \Cal I(X',Y)$,
   \item $\alp \bullet \delta \in \Cal I(X,Y')$ for any $\delta \in \Cal B(Y,Y')$, i.e., $\bullet: \Cal I(X, Y) \otimes \Cal B(Y,Y') \to \Cal I(X,Y')$,
      \end{enumerate}
      
\item (Chern class operator)
  \begin{enumerate}
   \item $c_1(L) \bullet \alp \in \Cal I(X,Y)$ for any line bundle $L$ over $X$, i.e., $c_1(L) \bullet {}: \Cal I(X,Y) \to \Cal I(X,Y),$
   \item $\alp \bullet c_1(M) \in \Cal I(X,Y)$ for any line bundle $M$ over $Y$, i.e., ${} \bullet c_1(M): \Cal I(X,Y) \to \Cal I(X,Y).$
 \end{enumerate}
\end{enumerate}
\begin{rem}\label{bi-var-ideal}
We note that in the case of ``bi-variant" ideal of $\Cal  M^{*,\sharp}(X, Y)^+$, \emph{thanks to the formulas listed in Remark \ref{rem-push-pull}}, the above (1) (pushforward) and (2) (pullback) are special cases of (3) product, and it is also the case for (4) (Chern class operator). Therefore, any ``bi-variant ideal" of $\Cal  M^{*,\sharp}(X, Y)^+$ is simply defined just as (3) (product).  This simplicity will be used later.
\end{rem}
It is easy to see the following:
\begin{pro}\label{quotient-pro}
\noindent
\begin{enumerate}
\item The (object-wise) kernel of a Grothendieck transformation $\gamma: \Cal B\to \Cal B'$ is a bi-variant ideal, i.e., the kernel $\ga^{-1}(X,Y)$ of the homomorphism $\ga: \Cal B(X,Y) \to \Cal B'(X,Y)$ gives rise to a bi-variant ideal, denoted by $\ga^{-1}$,  of the bi-variant theory $\Cal B$.

\item Given a bi-variant ideal $\Cal I \subset \Cal B$, the following quotient $\Cal B / \Cal I$ becomes a bi-variant theory by setting 
$$(\Cal B / \Cal I)(X,Y) := \Cal B(X,Y) / \Cal I(X, Y)$$
and by taking the bi-variant operations to be the ones induced by $\Cal B$. Namely, they are defined as follows: 
\begin{enumerate}
\item (product): the product operation
$$\bullet: (\Cal B / \Cal I)^*( X,Y) \otimes (\Cal B / \Cal I)^*( Y, Z) \to
(\Cal B / \Cal I)^*( X, Z)$$
is  defined by $[\alp] \bullet [\be]:=[\alp \bullet \be].$

\item (pushforward): 
  \begin{enumerate}
   \item for a \emph{proper} morphism $f:X \to X'$, the pushforward 
   $$f_*: (\Cal B / \Cal I)(X,Y) \to (\Cal B / \Cal I)(X',Y)$$
   is defined by $f_*[\alp]:=[f_*\alp]$.
  
   \item for a \emph{quasi-smooth} morphism $g:Y \to Y'$, the pushforward
   $${}_*g: (\Cal B / \Cal I)(X,Y) \to (\Cal B / \Cal I)(X,Y')$$
   is defined by $[\alp]{}_*g:=[\alp{}_*g]$.
    \end{enumerate}

\item (pullback)
 \begin{enumerate}
   \item for a \emph{quasi-smooth} morphism $f:X' \to X$, the pullback
   $$f^*: (\Cal B / \Cal I)(X,Y) \to (\Cal B / \Cal I)(X',Y)$$
   is defined by $f^*[\alp]:=[f^*\alp]$.
  
   \item for a \emph{proper} morphism $g:Y \to Y'$, the pull
   $${}^*g: (\Cal B / \Cal I)(X,Y) \to (\Cal B / \Cal I)(X,Y')$$
   is defined by $[\alp]{}^*g:=[\alp{}^*g]$.
    \end{enumerate}
\item (Chern class operator)
  \begin{enumerate}
   \item for any line bundle $L$ over $X$ the Chern class operator
   $$c_1(L) \bullet: (\Cal B / \Cal I)(X,Y) \to (\Cal B / \Cal I)(X,Y)$$
   is defined by $c_1(L) \bullet [\alp] := [c_1(L)\bullet \alp]$. (To be more precise, $c_1(L) \bullet$ should be written as
   $[c_1(L)] \bullet$.)
   \item for any line bundle $M$ over $Y$ the Chern class operator
   $${} \bullet c_1(M): (\Cal B / \Cal I)(X,Y) \to (\Cal B / \Cal I)(X,Y)$$
   is defined by $[\alp] \bullet c_1(M):= [\alp \bullet c_1(M)]$. (As above, $\bullet c_1(M)$ should be written as
   $\bullet c_1(M)]$.)

   \end{enumerate}
\end{enumerate}
\end{enumerate}
\end{pro}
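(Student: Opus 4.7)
The plan is to prove both statements by essentially unwinding the definitions: the closure axioms defining a bi-variant ideal in Definition \ref{BI} are precisely what is needed to make the kernel of a Grothendieck transformation ideal-like, and conversely to ensure that the operations on $\Cal B$ descend to the quotient $\Cal B / \Cal I$.

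For Part (1), I would fix $\alpha \in \ga^{-1}(X,Y)$, i.e., $\ga(\alpha) = 0$, and verify each of the four closure properties of Definition \ref{BI} in turn. For pushforward along a proper $f : X \to X'$, the Grothendieck transformation axiom gives $\ga(f_*\alpha) = f_*\ga(\alpha) = 0$, so $f_*\alpha \in \ga^{-1}(X', Y)$; quasi-smooth pushforward, quasi-smooth/proper pullback, and the product $\beta \bullet \alpha$ with $\beta \in \Cal B(X', X)$ (or $\alpha \bullet \delta$ with $\delta \in \Cal B(Y,Y')$) are handled identically using the corresponding axioms (2), (3), and (1) in the definition of a Grothendieck transformation. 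The Chern class operator cases $c_1(L) \bullet \alpha$ and $\alpha \bullet c_1(M)$ follow from axiom (4). This exhausts the list.

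For Part (2), the main content is to check that every operation on $\Cal B$ descends to a well-defined operation on the quotient. Given $\alpha, \alpha' \in \Cal B(X,Y)$ representing the same class modulo $\Cal I$, i.e., $\alpha - \alpha' \in \Cal I(X,Y)$, the ideal closure axiom (1)(a) for proper pushforward yields $f_*\alpha - f_*\alpha' = f_*(\alpha - \alpha') \in \Cal I(X', Y)$, so $[f_*\alpha] = [f_*\alpha']$; analogous checks using (1)(b), (2)(a), (2)(b), (4)(a), and (4)(b) handle the other pushforward, pullback, and Chern class operations. The one slightly more delicate case is the product, where I need to verify well-definedness in both variables simultaneously: if $\alpha - \alpha' \in \Cal I(X,Y)$ and $\beta - \beta' \in \Cal I(Y, Z)$, then
$$\alpha \bullet \beta - \alpha' \bullet \beta' = (\alpha - \alpha') \bullet \beta + \alpha' \bullet (\beta - \beta'),$$
and both summands lie in $\Cal I(X, Z)$ by the product closure axioms (3)(a) and (3)(b). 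Once well-definedness is established, all the bi-variant axioms $(A_1)$, $(A_2)$, $(A_2)'$, $(A_3)$, $(A_3)'$, $(A_{12})$, $(A_{13})$, $(A_{23})$, $(A_{123})$, the existence of units (represented by the class $[1_X]$), PPPU, and the Chern class operator properties from Proposition \ref{ch-op} descend to $\Cal B / \Cal I$ for free, since they are equalities in $\Cal B$ and therefore hold a fortiori in the quotient.

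There is essentially no technical obstacle in either part: the definition of a bi-variant ideal was engineered so that the four closure conditions (pushforward, pullback, product, Chern class operators) correspond bijectively with the operations that need to be well-defined on the quotient, and likewise the four compatibility axioms of a Grothendieck transformation correspond bijectively with the closure conditions needed on its kernel. The only step requiring any genuine thought is the bilinear well-definedness of $\bullet$ via the $(\alpha - \alpha')\bullet\beta + \alpha'\bullet(\beta - \beta')$ decomposition above, which I would isolate as the one non-automatic check; after that, every coherence axiom transfers verbatim from $\Cal B$ to $\Cal B / \Cal I$.
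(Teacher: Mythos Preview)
Your proposal is correct and is exactly the kind of routine verification the paper has in mind: the paper itself gives no proof of this proposition, prefacing it only with ``It is easy to see the following,'' so your unwinding of Definition~\ref{BI} and the Grothendieck transformation axioms is precisely what is intended. The one place you explicitly think through---bilinear well-definedness of the product via the decomposition $(\alpha-\alpha')\bullet\beta + \alpha'\bullet(\beta-\beta')$---is the only step with any content, and you have it right.
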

\begin{rem} Proposition \ref{quotient-pro} (2) means, in other words, that the quotient morphism 
$$\Theta: \Cal B \to \Cal B / \Cal I$$
 is a Grothendieck transformation of bi-variant theories.
\end{rem}
\subsection{A bi-variant algebraic cobordism with bundles $\Omega^{*,\sharp}(X,Y)$}\label{bi-variant}

First we recall the definition of the bivariant algebraic cobordism with vector bundles $\Omega^{*,\sharp}(X \xrightarrow f Y)$ \cite[Definition 5.7]{AY}. First we set
$$\mathcal M^{i,r}(X,Y)^{+}:=\mathbb L \otimes \Cal M^{i,r}(X, Y)^{+}$$
where $\mathbb L$ is the Lazard ring. 

\begin{defn}[The bivariant algebraic cobordism with vector bundles]
$$\Omega^{*,\sharp}(X \xrightarrow f Y):= \frac{\mathcal{M}^{*,\sharp}(X \xrightarrow f Y)^+}{\langle \mathcal R^{\op{LS}} \rangle (X \xrightarrow f Y)}$$
where $\langle \mathcal R^{\op{LS}}\rangle$ is the bivariant ideal generated by the bivariant subset $\mathcal R^{\op{LS}}$, which is defined as follows:
$$
\mathcal R^{\op{LS}}(X \to Y) := \begin{cases}
\mathcal R^{\op{LS}}(X \to pt), & \quad Y =pt,\\
\qquad \emptyset, & \quad Y \not =pt.
\end{cases}
$$
Here $\mathcal R^{\op{LS}}(X \to pt)$ is defined to be the kernel of the (surjective) morphism
$$\mathcal M^{-*,0}(X \to pt)^+=\mathbb L \otimes \Cal M^{-*,0}(X \to pt)^+\to d\Omega_*(X)=\Omega^*(X \to pt).$$
which is the $\mathbb L$-linear map, induced from a group homomorphism $\Cal M^{-*,0}(X \to pt)^+\to d\Omega_*(X)$ together with the ring homomorphism $\mathbb L \to d\Omega_*(pt)$ classifying the \emph{formal group law} for the first Chern class of the tensor product of line bundles.
\end{defn}
Here is an explicit description of the bivariant ideal $\langle \mathcal R^{\op{LS}} \rangle (X \xrightarrow f Y)$ \cite[Proposition 5.12]{AY}, which is a ``vector bundle" version of \cite[Lemma 3.8]{An}:
\begin{pro}\label{bi-ideal} The bivariant ideal $\langle \mathcal R^{\op{LS}}\rangle$ satisfies that $\langle \mathcal R^{\op{LS}}\rangle (X \xrightarrow f Y)$ consists of linear combinations of elements of the form
\begin{equation}\label{bi-form}
h_* \bigl ([E] \bullet \alp_0 \bullet g^*s \bullet \beta_0 \bigr)
\end{equation}
where $h:A'' \to X, g:B' \to pt, \alp_0 \in \mathcal M^{*,0}(A'' \to A')^+, \beta_0 \in \mathcal M^{*,0}(B' \to Y)^+$ and $s \in \mathcal R^{\op{LS}}(A \to pt)$ are as in the following diagram
\[
\xymatrix{
& X \ar@/^12pt/[drr]^f \\
A'' \ar[r]^{\Maru{$\alp_0$}} \ar@/^10pt/[ru]^h & A' \ar[r]^{\Maru{$g^*s$}} \ar[d] & B' \ar[d]^g  \ar[r]^{\Maru{$\beta_0$}} & Y \\
& A  \ar[r]^{\maru{s}} & pt
} 
\]
$E$ a vector bundle on $A''$ and $h:A'' \to X$ is a proper morphism. Here note that $[E]:=[A'' \xrightarrow {\op{id}_{A''}} A''; E] \in \mathcal{M}^{*,*}(A'' \xrightarrow {\op{id}_{A''}} A'')^+$.
\end{pro}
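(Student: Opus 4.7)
The plan is to prove the asserted equality by separately establishing its two inclusions.

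For $(\supseteq)$, this follows by unpacking Definition \ref{BI}: starting from $s \in \mathcal R^{\op{LS}}(A \to pt) \subseteq \langle\mathcal R^{\op{LS}}\rangle(A \to pt)$, successive applications of quasi-smooth pullback along $g: B' \to pt$, bivariant products with $[E]$, $\alp_0$, and $\be_0$, and proper pushforward along $h: A'' \to X$ each preserve membership in $\langle\mathcal R^{\op{LS}}\rangle$, so the resulting element $h_*([E] \bullet \alp_0 \bullet g^*s \bullet \be_0)$ lies in $\langle\mathcal R^{\op{LS}}\rangle(X \xrightarrow f Y)$.

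For $(\subseteq)$, let $\mathcal I(X \xrightarrow f Y)$ denote the set of linear combinations of elements of the form (\ref{bi-form}). I would show that $\mathcal I$ is itself a bivariant ideal containing $\mathcal R^{\op{LS}}$, from which the minimality of $\langle\mathcal R^{\op{LS}}\rangle$ forces $\langle\mathcal R^{\op{LS}}\rangle \subseteq \mathcal I$. Containment of $\mathcal R^{\op{LS}}$ is immediate: for $s \in \mathcal R^{\op{LS}}(X \to pt)$ one has $s = (\op{id}_X)_*([\mathbf 0] \bullet 1_X \bullet (\op{id}_{pt})^*s \bullet 1_{pt})$. The substance of the argument is to verify that $\mathcal I$ is closed under the four bivariant-ideal operations, each time reducing to a re-bracketing of the template (\ref{bi-form}). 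Proper pushforward by $k: X \to X'$ simply absorbs $k$ into $h$ by functoriality. Right product with $\de \in \mathcal M(Y \to Z)^+$ uses $(A_{12})$ to commute $\de$ past $h_*$, then associativity absorbs it into $\be_0 \bullet \de$. Left product with $\de$ uses the projection formula $(A_{123})$ applied to the fiber square of $h$ to convert $\de \bullet h_*(\cdots)$ into a pushforward of an expression in which the pullback of $\de$ is absorbed into $\alp_0$. Chern class operators on either side are moved inside $h_*$ via Proposition \ref{ch-op}(4) and Lemma \ref{fcfE}, and then the explicit description (\ref{chern-explicit}) rewrites $c_1(h^*L) \bullet [E]$ as a proper pushforward from the derived vanishing locus, which re-composes with $h$ to produce a new expression of the form (\ref{bi-form}).

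The main obstacle is closure under pullback along an independent square with base $k: Y' \to Y$. This base-changes the entire chain $A'' \to A' \to B' \to Y$ into a compatible tower of fiber squares over $Y'$; $(A_{23})$ moves the pullback past $h_*$ and $(A_{13})$ distributes it across the product. The crucial point is that the pullback of $g^*s$ is itself of the form $(g')^*s$ for the base-changed $g': B'' \to pt$, so the shape of the expression is preserved. The bookkeeping here extends \cite[Lemma 3.8]{An} to accommodate the vector-bundle factor through the identity that pullback commutes with formation of $[E]$, which is exactly the content of \cite[Proposition 5.12]{AY}.
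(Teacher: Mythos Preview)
The paper does not give its own proof of this proposition; it is stated as a citation of \cite[Proposition 5.12]{AY} (the vector-bundle extension of \cite[Lemma 3.8]{An}). Your two-inclusion strategy, showing that the set $\mathcal I$ of such linear combinations is itself a bivariant ideal containing $\mathcal R^{\op{LS}}$ and hence must contain the generated ideal, is precisely the approach taken in those references, so in substance your outline agrees with the source.

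That said, there is a consistent mis-citation running through your argument that you should fix. Definition~\ref{BI}, Proposition~\ref{ch-op}, and Lemma~\ref{fcfE} all concern the \emph{bi-variant} (correspondence) theory $\mathcal M^{*,\sharp}(X,Y)^+$, with its two-sided pushforwards, pullbacks, and Chern operators; Proposition~\ref{bi-ideal} lives in the Fulton--MacPherson \emph{bivariant} theory $\mathcal M^{*,\sharp}(X \xrightarrow f Y)^+$, whose ideal axioms and Chern-class formalism come from \cite{An,AY}, not from \S\ref{bi-variant} of this paper. In particular there is no ``Chern class operator on the right'' in the bivariant setting, and the closure properties you must verify are pushforward, pullback along independent squares, and two-sided bivariant product, not the four one-sided operations of Definition~\ref{BI}. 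The analogous lemmas do exist in the bivariant setting and your manipulations go through once the references are replaced by their counterparts in \cite{An,AY}, but as written you are invoking results from the wrong theory.
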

\begin{rem}\label{bi-ideal-rem} Here we emphasize that if we let $E$ be the $0$ bundle in (\ref{bi-form}) or equivalently if we delete $[E] \bullet $ from (\ref{bi-form}), the above proposition is nothing but  \cite[Lemma 3.8]{An}.
\end{rem}
The linear extension of the forgetting $\frak F: \Cal M^{i,r}(X \xrightarrow f Y)^{+} \to \Cal M^{i,r}(X,Y)^{+}$ by the Lazard ring $\mathbb L$ shall be denoted by the same symbol $\frak F: \mathcal M^{i,r}(X \xrightarrow f Y)^{+} \to \mathcal M^{i,r}(X,Y)^{+}$, which is $\mathbb L$-linear.

In order to define  a ``correspondence" version $\langle \mathcal R^{\op{LS}}\rangle (X,Y)$ of $\langle \mathcal R^{\op{LS}}\rangle (X \xrightarrow f Y)$, one might be tempted to simply define it as the image of $\langle \mathcal R^{\op{LS}}\rangle (X \xrightarrow f Y)$ by $\frak F: \mathcal M^{i,r}(X \xrightarrow f Y)^{+} \to \mathcal M^{i,r}(X,Y)^{+}$:
$$\frak F \left (\langle \mathcal R^{\op{LS}}\rangle (X \xrightarrow f Y) \right)$$
which consists of linear combinations of the form $\frak F\left( h_* \bigl ([E] \bullet \alp_0 \bullet g^*s \bullet \beta_0 \bigr) \right )$
by Proposition \ref{bi-ideal}. 
It follows from Lemma \ref{forget-lemma} that we have
\begin{equation}\label{forget-formula}
\frak F\left( h_* \bigl ([E] \bullet \alp_0 \bullet g^*s \bullet \beta_0 \bigr) \right ) = h_* \Bigl (\frak F ([E] ) \bullet \frak F (\alp_0 ) \bullet \frak F (g^*s)  \bullet \frak F (\beta_0)  \Bigr).
\end{equation}
Now we state our theorem:
\begin{thm} There exists a bi-variant ideal $\langle \mathcal R^{\op{LS}}\rangle(-,-) \subset \mathcal{M}^{*,\sharp}(-,-)^+$ which is invariant under the $\mathbb L$-module structure such that the following hold:
\begin{enumerate}
\item $\frak F \left (\langle \mathcal R^{\op{LS}}\rangle (X \xrightarrow f Y) \right) \subset \langle \mathcal R^{\op{LS}}\rangle (X,Y)$.
\item We set
$$\Omega^{*,\sharp}(X, Y) := \frac{\mathcal M^{*,\sharp}(X,Y)^+}{\langle \mathcal R^{\op{LS}}\rangle (X,Y)}.$$
Then the forgetting homomorphism $\frak F : \mathcal M^{*,\sharp}(X \to Y)^+ \to \mathcal M^{*,\sharp}(X,Y)^+$ descends to
$$\widetilde {\frak F}: \Omega^{*,\sharp}(X \to Y) \to \Omega^{*,\sharp}(X, Y)$$
defined by $\widetilde {\frak F} ([\alp]):=[\frak F(\alp)]$, where on the left-hand side $[\alp]:=\alp + \langle \mathcal R^{\op{LS}}\rangle(X \to Y)$ and on the right-hand side $[-]$ is $[\beta]:=\beta + \langle \mathcal R^{\op{LS}}\rangle(X,Y)$, 
i.e., we have the following commutative diagram:
\begin{equation}\label{cd1}
\CD
\mathcal M^{*,\sharp}(X \xrightarrow f Y)^+  @> {\frak F} >> \mathcal M^{*,\sharp}(X, Y)^+  \\
@V {\pi} VV @VV {\pi} V\\
\Omega^{*,\sharp}(X \xrightarrow f Y)  @>> {\widetilde {\frak F}} > \Omega^{*,\sharp}(X,Y)\endCD
\end{equation}
where both $\pi$'s are the quotient maps.
\item \label{(X,*)} When $Y=pt$ is a point, we have the isomorphism:
$$\widetilde {\frak F}: \Omega^{*,\sharp}(X \to pt) \xrightarrow {\cong} \Omega^{*,\sharp}(X, pt),$$
thus we have $\Omega^{*,\sharp}(X, pt) \cong \Omega^{*,\sharp}(X \to pt) = \Omega_{-*,\sharp}(X) \cong \omega_{-*,\sharp}(X).$
\end{enumerate}
\end{thm}
\begin{proof}
In order to see how to define a reasonable ``correspondence" version $\langle \mathcal R^{\op{LS}}\rangle (X,Y)$, we express the right hand side of (\ref{forget-formula}) simply as follows:
\begin{align*}
h_* \Bigl (\frak F ([E] ) \bullet \frak F (\alp_0 ) \bullet \frak F (g^*s)  \bullet \frak F (\beta_0)  \Bigr) & = h_*\frak F ([E] ) \bullet \frak F (\alp_0 ) \bullet \frak F (g^*s)  \bullet \frak F (\beta_0) \quad \text{(by $A_{12}$)} \\
& = \Bigl (h_*\frak F ([E]) \bullet \frak F (\alp_0 ) \Bigr ) \bullet \frak F (g^*s)  \bullet \frak F (\beta_0).
\end{align*}
Here clearly we have
$$h_*\frak F ([E] ) \bullet \frak F (\alp_0 )  \in  \mathcal M^{*,\sharp}(X,A')^{+}, \quad \frak F (g^*s)  \in \mathcal M^{*,\sharp}(A', B)^{+}, \quad \frak F (\beta_0)   \in \mathcal M^{*, \sharp}(B,Y)^{+}.$$
Instead of giving an explicit form for an ``correspondence" version $\langle \mathcal R^{\op{LS}}\rangle (X,Y)$ just like (\ref{bi-ideal}), we define the following less explicit one: $\langle \mathcal R^{\op{LS}}\rangle (X,Y)$ is defined to be generated (as an abelian group) by elements of the form
\begin{equation}\label{b-v-ideal}
\alp_0' \bullet \frak F (g^*s)  \bullet \beta_0',
\end{equation}
where  $\alp_0'  \in  \mathcal M^{*,\sharp}(X,A')^{+}, \beta_0' \in \mathcal M^{*, \sharp}(B,Y)^{+}$, and $g$ and $s$ are as in (\ref{bi-ideal}).
Then clearly it follows from (\ref{forget-formula}) that we do have
$$\frak F \left (\langle \mathcal R^{\op{LS}}\rangle (X \xrightarrow f Y) \right) \subset \langle \mathcal R^{\op{LS}}\rangle (X,Y).$$
Furthermore it follows from (\ref{b-v-ideal}) that the following product property holds:
for $\alp \in \langle \mathcal R^{\op{LS}}\rangle(X,Y)$
 \begin{enumerate}
  \item $\beta \bullet \alp \in \langle \mathcal R^{\op{LS}}\rangle (X',Y)$ for any $\beta \in \mathcal{M}^{*,\sharp}(X', X)^+$, 
  \item $\alp \bullet \delta \in \langle \mathcal R^{\op{LS}}\rangle (X,Y')$ for any $\delta \in \mathcal{M}^{*,\sharp}(Y,Y')^+$.
\end{enumerate}
Therefore $\langle \mathcal R^{\op{LS}}\rangle(-,-)$ is a bi-variant ideal in $\mathcal{M}^{*,\sharp}(-,-)^+$. Moreover $\langle \mathcal R^{\op{LS}}\rangle (X,Y)$ is automatically invariant under the $\mathbb L$-module structure, i.e., stable under the $\mathbb L$-action coming from the ring homomorphism $\mathbb L \to \mathcal M^{*, \sharp}(Z,Z)^{+}:\ell \to \ell \otimes \jeden_Z$ for $Z =X, Y$. Clearly we obtain the above commutative diagram (\ref{cd1}).\\
\indent Next we observe that the forgetting homomorphism
$$\frak F: \mathcal M^{*,\sharp}(X \to pt)^+ \to \mathcal M^{*,\sharp}(X,pt)^+$$
is an isomorphism because $\frak F([V \xrightarrow h X; E])=[X \xleftarrow h V \xrightarrow {} pt;E]$ and the inverse of $\frak F$
$$\frak F^{-1}: \mathcal M^{*,\sharp}(X,pt)^+ \to \mathcal M^{*,\sharp}(X \to pt)^+$$
 defined by $\frak F^{-1}([X \xleftarrow h V \xrightarrow {} pt;E]) := [V \xrightarrow h X; E]$ is well-defined. As observed above, 
we have
$$\frak F (\langle \mathcal R^{\op{LS}}\rangle(X \to pt)) \subset \langle \mathcal R^{\op{LS}}\rangle(X,pt).$$
Now we want to show the following reverse inclusion in the case of $X \to pt$:
\begin{equation}\label{supset}
\frak F (\langle \mathcal R^{\op{LS}}\rangle(X \to pt)) \supset \langle \mathcal R^{\op{LS}}\rangle(X,pt).
\end{equation}
First, we note that, as defined above, $\langle \mathcal R^{\op{LS}}\rangle (X,pt)$ is an abelian group generated by elements of the form
\begin{equation*}
\alp_0' \bullet \frak F (g^*s)  \bullet \beta_0' 
\end{equation*}
where $\alp_0'  \in  \mathcal M^{*,\sharp}(X,A')^{+}, \beta_0' \in \mathcal M^{*, \sharp}(B,pt)^{+}$, and $g$ and $s$ are the same as above. Then clearly we do have that $\beta'_0 = \frak F(\beta_0)$ for some $\beta_0 \in \mathcal M^{*, \sharp}(B \to pt)^{+}$, which is the tautological inverse of $\beta_0$, as observed above. Since $\frak F$ is $\mathbb L$-linear, it is enough to assume that $\alp_0'= [X \xleftarrow p A'' \xrightarrow {s'} A';E]$ (where $p$ is proper, $s'$ is quasi-smooth and $E$ is a vector bundle over $A''$), which can be expressed as
\begin{align*}
\alp_0' & = [X \xleftarrow p A'' \xrightarrow {s'} A';E] \\
& ={}_p \jeden \bullet [[E]] \bullet \jeden_{s'} \\
&= {}_p \jeden \bullet \frak F([E]) \bullet \frak F(\theta (s')) \quad \text{(by Lemma \ref{forget-lemma}, where ``smooth" is replaced by ``quasi-smooth")}\\
& = p_*(\frak F([E])) \bullet \frak F(\theta (s'))\\
&= \frak F(h_*[E]) \bullet \frak F(\theta (s')) \quad \text{(by Lemma \ref{forget-lemma} (2))}\\
& =\frak F(h_*[E] \bullet \theta (s'))  \quad \text{(by Lemma \ref{forget-lemma} (1))}\\
\end{align*}
Therefore we have
\begin{align*}
\alp_0' \bullet \frak F (g^*s)  \bullet \beta_0' & = \frak F(h_*[E] \bullet \theta (s')) \bullet \frak F (g^*s)  \bullet \frak F(\beta_0)\\
& =  \frak F \Bigl (h_*[E] \bullet \theta (s') \bullet g^*s  \bullet \beta_0 \Bigr)\\
& = \frak F \Bigl (h_* ([E] \bullet \theta (s') \bullet g^*s \bullet \beta_0) \Bigr)\\
& \in \frak F (\langle \mathcal R^{\op{LS}}\rangle(X \to pt)),
\end{align*}
which implies the above inclusion (\ref{supset}).
Hence we have $\frak F (\langle \mathcal R^{\op{LS}}\rangle(X \to pt)) = \langle \mathcal R^{\op{LS}}\rangle(X,pt).$
Therefore we see that the above isomorphism $\frak F: \mathcal M^{*,\sharp}(X \to pt)^+ \to \mathcal M^{*,\sharp}(X,pt)^+$ implies the isomorphism 
$$\Omega^{*,\sharp}(X \to pt) \cong \Omega^{*,\sharp}(X, pt) .$$
\end{proof}
}
\begin{prob} Would it be possible to give some conditions on $f:X \to Y$, $X$ and $Y$ so that we have an isomorphism $\Omega^{*,\sharp}(X \xrightarrow f Y) \cong \Omega^{*,\sharp}(X,Y)$?
\end{prob}
\begin{rem} The forgetting map $\widetilde {\frak F}: \Omega^{*,\sharp}(X \to Y) \to \Omega^{*,\sharp}(X, Y)$ satisfies the following:
\begin{enumerate}
\item $\widetilde {\frak F}$ is commutative with the product $\bullet$,
\item $\widetilde {\frak F}$ is commutative with the pushforward, i.e., $\widetilde {\frak F} \circ f_* = f_* \circ \widetilde {\frak F}.$
\item $\widetilde {\frak F}$ is commutative the Chern class operators, which are special cases of (1).
\item As to the pullback, we cannot expect that the following diagram is commutative, 
\begin {equation}\label{dia}
\xymatrix
{
\Omega^{*,\sharp}(X \xrightarrow f Y) \ar[r]^{g^*} \ar[d]_{\frak F} &  \Omega^{*,\sharp}(X' \xrightarrow {f'} Y') \ar[d]_{\frak F}\\
\Omega^{*,\sharp}(X,Y)  \ar[r]_{(g')^* \circ (\, {}^*g)} & \Omega^{*,\sharp}(X',Y') 
}
\end{equation}
since the definitions of pullback are different. For the sake of simplicity, we show it. Here we consider the following fiber square 
$$\CD
V' @> g'' >> V \\
@V {h'} VV @VV {h}V\\
X' @> g' >> X \\
@V f' VV @VV f V\\
Y' @>> g > Y. \endCD
$$
for the pullback $g^*:\Omega^{*,\sharp}(X \xrightarrow f Y)  \to \Omega^{*,\sharp}(X' \xrightarrow {f'} Y')$ 
\begin{align*}
(\frak F \circ g^*)([V \xrightarrow h X;E) &= \frak F ([V' \xrightarrow h' X'; (g'')^*E])\\
& =[X' \xleftarrow h' V' \xrightarrow {f' \circ h'} Y'; (g'')^*E]).
\end{align*}
\begin{align*}
\Bigl  (\bigl ((g')^* \circ (\, {}^*g) \bigr )\circ \frak F \Bigr )  ([V \xrightarrow h X; E]) 
&= \bigl ((g')^* \circ (\, {}^*g) \bigr ) ([X \xleftarrow h V \xrightarrow {f \circ h} Y; E])\\
&= (g')^* ([X \xleftarrow h V \xrightarrow {f \circ h} Y; E]) \, {}^*g\\
& =[X' \xleftarrow {h' \circ \widetilde {g''}} V' \times_V V'  \xrightarrow {f' \circ h' \circ \widetilde {g''}} Y'; (\widetilde {g''})^*(g'')^*E]).
\end{align*}
Here we consider the following fiber square:
$$\CD
V' \times _V V' @> {\widetilde {g''}} >> V'\\
@V {\widetilde {g''}} VV @VV {g''} V\\
V'  @>> {g''} > V. \endCD
$$
Thus in general we have that $\frak F \circ g^* \not = \bigl ((g')^* \circ (\, {}^*g) \bigr )\circ \frak F$ in the above diagram (\ref{dia}).
\end{enumerate}
\end{rem}
\section{Generalizations}\label{dis}
In this section we discuss briefly some possible generalized versions of Annala's bivariant derived algebraic cobordism and our bi-variant algebraic cobordism treated in the previous section. 

\subsection{A ``generalized" bivariant derived algebraic cobordism  $\Omega^*_{\frak S}(X \to Y)$ associated to a system $\frak S$ of subgroups $\frak s(A)$ of $\mathcal M_*(A)$ }\label{gbdac}

For Annala's bivariand derive algebraic cobordism $\Omega^*(X \xrightarrow f Y):= \frac{\mathcal M^*(X \to Y)}{\langle \mathcal R^{\op{LS}} \rangle (X \xrightarrow f Y)}$, the bivariant ideal $\langle \mathcal R^{\op{LS}} \rangle (X \xrightarrow f Y)$ of $\mathcal M^*(X \to Y)$ is constructed by using the kernels $\op{Ker} \eta_A$ of the morphisms $\eta_A: \mathcal M^*(A \to pt) = \mathcal M_*(A) \to d \Omega (A)$.  The kernel $\op{Ker} \eta_A$ is a very special subgroup of $\mathcal M^*(A \to pt)$. So, instead of taking this very special subgroup, we can consider an arbitrary subgroup, putting aside the issue of what its geometric or algebraic or topological meaning is. Namely, if we let $\frak S$ be a family or system of subgroups $\frak s(A) \subset \mathcal M^*(A \to pt)$ where $A \in Obj(\Cal C)$. Then the bivariant ideal $\langle \mathcal R^{\frak S} \rangle (X \xrightarrow f Y)$ associated to the system  $\frak S$
 is defined to consist of linear combinations of elements of of the following form, as in \cite[Lemma 3.8]{An} (also see Proposition \ref{bi-ideal} and Remark \ref{bi-ideal-rem}), 
\begin{equation}\label{form-5}
h_* \bigl (\alp_0 \bullet g^*s \bullet \beta_0 \bigr)
\end{equation}
where $h:A'' \to X, g:B' \to pt, \alp_0 \in \mathcal M^{*}(A'' \to A')^+, \beta_0 \in \mathcal M^{*}(B' \to Y)^+$ and $s \in \frak s(A) \subset \mathcal M^*(A \to pt)$ are as in the following diagram
\[
\xymatrix{
& X \ar@/^12pt/[drr]^f \\
A'' \ar[r]^{\Maru{$\alp_0$}} \ar@/^10pt/[ru]^h & A' \ar[r]^{\Maru{$g^*s$}} \ar[d] & B' \ar[d]^g  \ar[r]^{\Maru{$\beta_0$}} & Y \\
& A  \ar[r]^{\maru{s}} & pt
} 
\]
$h:A'' \to X$ is a proper morphism. 
Then a ``generalized" bivariant derived algebraic cobordism  $\Omega^*_{\frak S}(X \to Y)$ associated to a system $\frak S$ of subgroups $\frak s(A)$ of $\mathcal M_*(A)$ is defined to be the following quotient
\begin{equation}
\Omega^*_{\frak S}(X \to Y):= \frac{\mathcal M^*(X \to Y)}{\langle \mathcal R^{\frak S} \rangle (X \xrightarrow f Y)}.
\end{equation}
If each $\mathcal M^*(A \to pt)$ contains the kernel $\op{Ker} \eta_A$, then we have the surjection 
$\Omega^*(X \to Y) \twoheadrightarrow \Omega^*_{\frak S}(X \to Y)$. 
If each $\mathcal M^*(A \to pt)$ is contained in the kernel $\op{Ker} \eta_A$, then we have the other-way surjection $\Omega^*_{\frak S}(X \to Y) \twoheadrightarrow  \Omega^*(X \to Y).$
\subsection{A naive ``algebraic cobordism" $\Omega_{prop-sm}(X \to Y)$ using \emph{proper} and \emph{smooth} morphisms.} 
Let us replace the class of quasi-smooth morphisms by the class of smooth morphisms in Annala's bivariant theory $\mathcal M^*(X \xrightarrow f Y)$
and denote the replaced one by $\mathcal M^*_{prop-sm}(X \to Y)$.
Of course, if $f:X \to Y$ is not surjective, then $\mathcal M^*_{prop-sm}(X \to Y)=0$ the trivial group. Note that this kind of thing \emph{does} happen in bivariant theory as pointed out in \cite[\underline{Remarks}, p.62]{FM}. 
Then we have the following natural inclusion, since a smooth morphism is a quasi-smooth morphism:
$$\iota :\mathcal M^*_{prop-sm}(X \to Y) \hookrightarrow \mathcal M^*(X \to Y)$$
Then the kernel of the following composite morphism
$$\mathcal M^*_{prop-sm}(X \to pt) \xrightarrow {\iota} \mathcal M^*(X \to pt) = \mathcal M_*(X) \to d \Omega (X)$$
is denoted by $\mathcal R_{prop-sm}^{\op{LS}}(X \to pt)$.
Then, in the same way as the construction of Annala's derived bivariant algebraic cobordism $\Omega^*(X \to Y)$, we define 
$\Omega^*_{prop-sm}(X \to Y)$ as follows:
The bivariant ideal $\langle \mathcal R^{\op{LS}}_{prop-sm} \rangle (X \xrightarrow f Y)$
 is defined to consist of linear combinations of elements of the above form (\ref{form-5}) 
 where $s \in \mathcal R_{prop-sm}^{\op{LS}}(X \to pt)$.
Then we define 
$$\Omega_{prop-sm}^*(X \to Y):= \frac {\mathcal M^*_{prop-sm} (X \to Y)}{\langle \mathcal R^{\op{LS}}_{prop-sm} \rangle (X \xrightarrow f Y)}.$$
Here we emphasize that the category considered in this case is the classical category, not the category of derived schemes. Hence the explicit description (\ref{chern-explicit}) is not well-defined as an element $\Omega_{prop-sm}^*(X \xrightarrow {\op{id}_X} X)$ in the classical category, thus the Chern class operator $c_1(L) \bullet$ is not available\footnote{If we consider the universal oriented bivariant theory $\mathbb {OM}^{prop}_{sm}(X \to Y)$ \cite{Yokura-obt}, then the Chern class operator $c_1(L) \bullet$ is available.} in this case.
Thus $\Omega_{prop-sm}^*(X \to Y)$ is a bivariant theory without the data of the Chern class operator of line bundles.
It is easy to see that there is an embedding
$$\tilde {\iota}: \Omega_{prop-sm}^*(X \to Y) \hookrightarrow \Omega^*(X \to Y)$$
since we have $\iota \left (\langle \mathcal R^{\op{LS}}_{prop-sm} \rangle (X \xrightarrow f Y) \right ) \subset \langle \mathcal R^{\op{LS}} \rangle (X \xrightarrow f Y)$. 
\subsection{In the general case of $confined$ morphisms and $specialized$ morphisms or $vert$ morphisms and $horiz$ morphisms.}

Let $conf$ and $spe$ be the classes of confined morphisms and specialized morphisms, respectively and also $vert$ and $horiz$ be the classes of $vert$ morphisms and $horiz$ morphisms. Then, in the above section \S \ref{gnac}, $prop$-$sm$ can be replaced by $conf$-$spe$ and $vert$-$horiz$:
\begin{equation}
\Omega^*_{conf-spe,\frak S}(X \to Y), \qquad \Omega^*_{vert-horiz,\frak S}(X \to Y).
\end{equation}
Note that in a general case when there is no notion of line bundles or vector bundles available, we cannot consider the Chern class operators $c_1(L) \bullet$ of line bundles. Thus the above are bivariant theories without Chern class operators $c_1(L) \bullet$. However, here we \emph{emphasize} that in the category of derived schemes, if we let $confined$ and $specialized$ morphisms or $vert$ and $horiz$ morphisms be \emph{proper} morphisms and \emph{quasi-smooth} morphisms, respectively, and let $\frak S$ be the system consisting of the kernel $\op{Ker} \eta_A$ of the morphism $\eta_A: \mathcal M^*(A \to pt) \to d\Omega_*(A)$ for $A \in \Cal C$, then $\Omega^*_{conf-spe,\frak S}(X \to Y) =\Omega^*_{vert-horiz,\frak S}(X \to Y)$ is nothing but Annala's bivariant derived algebraic cobordism $\Omega^*(X \to Y)$, which \emph{is equipped with} the Chern class operators $c_1(L) \bullet$ of line bundles.
 
\vspace{0.5cm}

{\bf Acknowledgements:}  
The author would like to thank J\"org Sch\"urmann and 
the anonymous referee for careful reading of the paper and for their many valuable comments and constructive suggestions. The author is supported by JSPS KAKENHI Grant Numbers JP19K03468 and JP23K03117. \\

\vspace{0.5cm}

\end{document}